\def\@begintheorem#1#2{\list{}{\thm@body}%
  \item[]{\bf #1~#2.}\quad\it\ignorespaces}
\def\@opargbegintheorem#1#2#3{\list{}{\thm@body}%
  \item[]{\bf #1~#2~\ifrembrks #3\global\rembrksfalse\else (#3)\fi.}%
  \quad\it\ignorespaces}
\def\@endtheorem{\endlist}
\newtheorem{theorem}{Theorem}[section]
\newtheorem{corollary}[theorem]{Corollary}
\newtheorem{lemma}[theorem]{Lemma}
\newtheorem{definition}[theorem]{Definition}
\newproof{proof}{ Proof}
\begin{document}

\title{No finite $5$-regular matchstick graph exists}

\author[ubt]{Sascha Kurz}
\ead{sascha.kurz@uni-bayreuth.de}
\address[ubt]{Fakult\"at f\"ur Mathematik, Physik und Informatik, Universit\"at Bayreuth, Germany}

\begin{abstract}
  A graph $G=(V,E)$ is called a unit-distance graph in the plane if there is an injective embedding of $V$ in
  the plane such that every pair of adjacent vertices are at unit distance apart. If additionally the corresponding
  edges are non-crossing and all vertices have the same degree $r$ we talk of a regular matchstick graph. Due to
  Euler's polyhedron formula we have $r\le 5$. The smallest known $4$-regular matchstick graph is the so called
  Harborth graph consisting of $52$ vertices. In this article we prove that no finite $5$-regular matchstick graph
  exists.
\end{abstract}

\begin{keyword} unit-distance graphs
\MSC{
52C99$^\star\!$ \sep 05C62
}
\end{keyword}

\maketitle

\section{Prologue}
\noindent
One of the possibly best known problems in combinatorial geometry asks how often the same distance can occur among
$n$ points in the plane. Via scaling we can assume that the most frequent distance has length $1$. Given any set $P$ of points in the plane, we can define the so called unit-distance graph in the plane, connecting two elements of $P$ by an edge if their distance is one. The known bounds for the maximum number $u(n)$ of edges of a unit-distance graph in the plane, see e.~g.{} \cite{1086.52001}, are given by
\[
  \Omega\!\left(ne^{\frac{c\log n}{\log\log n}}\right)\le u(n)\le O\!\left(n^{\frac{4}{3}}\right).
\]
For $n\le 14$ the exact numbers of $u(n)$ were determined in \cite{schade}, see also \cite{1086.52001}.

If we additionally require that the edges are non-crossing, then we obtain another class of geometrical and combinatorial objects:

\begin{definition}
  A \textbf{matchstick graph} $\mathcal{M}$ consists of a graph $G=(V,E)$
  and an injective embedding $g:V\rightarrow\mathbb{R}^2$ in the plane which fulfill the following conditions:
  \begin{itemize}
    \item[(1)] $G$ is a planar (simple) graph,
    \item[(2)] for all edges $\{i,j\}\in E$ we have $\left\Vert g(i),g(j)\right\Vert_2=1$, where $\Vert x,y\Vert_2$
               denotes the Euclidean distance between the points $x$ and $y$,
    \item[(3)] $g(i)\neq g(j)$ for $i\neq j$, and
    \item[(4)] if $\left\{i_1,j_1\right\},\left\{i_2,j_2\right\}\in E$ for pairwise different vertices
               $i_1,i_2,j_1,j_2\in V$ then the line segments $\overline{g\left(i_1\right)g\left(j_1\right)}$
               and $\overline{g\left(i_2\right)
               g\left(j_2\right)}$ do not have a common point.
  \end{itemize}
\end{definition}

\noindent
For matchstick graphs the known bounds for the maximum number $\tilde{u}(n)$ of edges, see e.~g.{} \cite{1086.52001}, are given by
\[
  \left\lfloor 3n-\sqrt{12n-3}\right\rfloor\le \tilde{u}(n)\le 3n-O\!\left(\sqrt{n}\right),
\]
where the lower bound is conjectured to be exact.

We call a matchstick graph $r$-regular if every vertex has degree $r$. In \cite{matchsticks_in_the_plane} the authors consider $r$-regular matchstick graphs with the minimum number $m(r)$ of vertices. Obviously we have $m(0)=1$, $m(1)=2$, and $m(2)=3$, corresponding to a single vertex, a single edge, and a triangle, respectively.

The determination of $m(3)$ is an entertaining amusement. At first we observe that $m(3)$ must be even since every graph contains an even number of vertices with odd degree. For $n\le 8$ vertices the set of $3$-regular planar graphs is fairly assessable, consisting of one graph of order, i.~e.{} the number of vertices, $4$, one graph of order $6$, and three graphs of order $8$. Utilizing area arguments rules out all but one graph of order $8$, so that we have $m(3)=8$. For degree $r=4$ the exact determination of $m(4)$ is unsettled so far. The smallest known example is the so called Harborth graph, see e.~g.{} \cite{gerbracht}, yielding $m(4)\le 52$.

Due to the Eulerian polyhedron formula every finite planar graph contains a vertex of degree at most five so that we have $m(r)=\infty$ for $r\ge 6$. We would like to remark that the regular triangular lattice is the unique example of an infinite matchstick graph with degree at least $6$.

For degree $5$ it is announced at several places that no finite $5$-regular matchstick graph does exist, see e.~g.{} Ivars Peterson's MathTrek ``Match Sticks in the Summer'' \cite{peterson} most likely referring to personal communication with Heiko Harborth, the discoverer of the $4$-regular matchstick graph. Indeed Heiko Harborth posed the question whether there exists a $5$-regular matchstick graph, or not at an Oberwolfach meeting and was aware of a preprint claiming the non-existence proof (personal communication). After a while he found out that there were some mistakes in the proof, so that to his knowledge there is no correct proof of the non-existence. Asking the author of this preprint about the state of this problem he replied too, that the problem is still open (at this point in time the concerning was untraceable).

Wolfram Research's MathWorld, see http://mathworld.wolfram.com/MatchstickGraph.html, refers to Erich Friedman, who himself maintains a webpage \cite{friedman} stating the non-existence of a $5$-regular matchstick graph. Asking Erich Friedman for a proof of this claim he responded
that unfortunately he lost his reference but also thinks that such a graph cannot exist.

In \cite{1063.05036} the authors list five publications where they have mentioned an unpublished proof for the non-existence of a $5$-regular matchstick graph and state that up to their knowledge the problem is open.

After the submission of this paper one of the referees came up with a very short proof for the non-existence \cite{short}. In this paper we would like to give a different, admittedly more complicated, proof using a topological equation that needs some explanation. 
So in some parts this (unpublished) paper coincides with \cite{short}.

In the meantime Heiko Harborth could recover the lost preprint and send it to the original author and myself. Curiously enough, after 
rectifying some annoying minor mistakes, the preprint turns out to be basically correct. So the author of this article took the opportunity to retype and slightly modify the original manuscript, see \cite{manuscript}. It even turned out that the underlying  method can be easily adopted to prove the following stronger result: No finite matchstick graph with minimum degree $5$ does exist. So up to now there are at least three proofs of Theorem~\ref{thm_main} whose pairwise intersections are non-empty.


We would like to mention the recent proof of the Higuchi conjecture \cite{1117.05026} -- a result of similar flavor where related techniques are applied.

\section{Main Theorem}

\begin{theorem}
  \label{thm_main}
  No finite $5$-regular matchstick graph does exist.
\end{theorem}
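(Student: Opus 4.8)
The plan is to argue by contradiction: assume a finite $5$-regular matchstick graph $\mathcal{M}$ with underlying graph $G=(V,E)$ and plane embedding $g$ exists. Passing to a connected component we may assume $G$ is connected, and we fix the planar embedding and its face set $F$. Put $n=|V|$. Five-regularity gives $2|E|=5n$, hence $|E|=\tfrac52 n$, and Euler's polyhedron formula yields $|F|=\tfrac32 n+2$. Since $\sum_{f\in F}|f|=2|E|=5n$, where $|f|$ is the length of the boundary walk of $f$, we obtain the \emph{topological equation}
\[
  \sum_{f\in F}\bigl(4-|f|\bigr)=4|F|-2|E|=n+8,
\]
equivalently $f_3=n+8+\sum_{k\ge 5}(k-4)f_k$, where $f_k$ counts the faces with boundary length $k$; in particular $G$ has at least $n+8$ triangular faces. (Written in terms of corner angles $\alpha(v,f)$, the same identity combines $\sum_{f\ni v}\alpha(v,f)=360^\circ$ at every interior vertex with $\sum_{v\in f}\alpha(v,f)=(|f|-2)\,180^\circ$ at every bounded face -- this is the ``topological equation that needs some explanation'' mentioned in the introduction.) This abundance of triangles is what we aim to contradict.

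The second ingredient consists of the metric constraints forced by a matchstick embedding. Every triangular face is a unit equilateral triangle and contributes exactly $60^\circ$ at each of its three corners; a quadrilateral with four unit sides is either a rhombus or a non-convex ``dart'', and its four interior angles are rigidly linked; moreover, if two neighbours $u,w$ of a vertex $v$ are seen from $g(v)$ under an angle strictly smaller than $60^\circ$, then $\Vert g(u)-g(w)\Vert_2<1$, so $u$ and $w$ cannot be adjacent. The decisive local observation is that the five faces around an interior vertex have angles summing to $360^\circ$; since $5\cdot 60^\circ=300^\circ<360^\circ$, these five faces cannot all be triangles, and a short case analysis shows that the number $t(v)$ of triangular faces incident to an interior vertex $v$ is at most $4$, with the fifth face forced to have angle exactly $120^\circ$ at $v$ when $t(v)=4$. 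For a vertex on the boundary of the convex hull of $g(V)$ all incident edges lie in a cone of angle less than $180^\circ$, so the angles of its bounded incident faces sum to less than $180^\circ$; hence such a vertex has at most two incident triangular faces.

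Third, I would set up a discharging argument. Assign to every vertex the charge $\deg(v)-4=1$ and to every face the charge $|f|-4$, so that the total charge equals $4|E|-4(|V|+|F|)=-8$, a negative constant. The goal is to redistribute these charges by local rules -- transporting charge from vertices and from large faces ($|f|\ge 5$, which have positive charge) to the triangular faces (each starting with charge $-1$), while tracking the deficiency concentrated near the outer boundary -- so that after the redistribution every vertex and every face carries non-negative charge. As discharging preserves the total, this contradicts the value $-8$ and establishes the theorem. The metric facts above are exactly what makes such rules admissible: each triangle collects a fixed fraction of charge from each of its three corners; a vertex meeting few triangles -- in particular a convex-hull vertex, where $t(v)\le 2$ -- keeps a surplus; and a vertex surrounded by four triangles must be compensated through its forced $120^\circ$ fifth face, whose shape is constrained enough to release the missing amount.

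The principal obstacle is precisely the geometric case analysis underlying the discharging: one has to enumerate, up to the rigidity imposed by the unit edge lengths and the non-crossing condition, all local configurations of triangular and other small faces around a vertex and along the outer boundary, and then check a single fixed set of discharging rules against every one of them. A secondary difficulty is the reduction to a sufficiently well-behaved embedding -- since $G$ is only assumed connected, cut vertices, bridges and faces whose boundary is not a simple cycle have to be treated -- together with the verification that the surplus of $n+8$ triangles genuinely forces the contradiction through the boundary vertices, whose triangle counts are the most severely restricted.
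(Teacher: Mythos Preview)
Your proposal correctly sets up a discharging framework and identifies the relevant Euler-formula identity and metric constraints, but it remains a plan rather than a proof: no concrete discharging rules are given, and you yourself flag the geometric case analysis as the unresolved ``principal obstacle''. That obstacle is genuine and not minor. The rule you sketch---each triangle draws $\tfrac13$ from each incident vertex---already fails at the configuration $\{3,3,3,3,4\}$: a vertex meeting four equilateral triangles and one unit rhombus at its $120^{\circ}$ corner ends with charge $1-\tfrac{4}{3}=-\tfrac{1}{3}$, while the rhombus carries initial charge $|f|-4=0$ and has nothing to release. Your sentence that this forced $120^{\circ}$ fifth face is ``constrained enough to release the missing amount'' is precisely the step where the argument is absent. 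Worse, such vertices are not isolated: edge-connected patches of vertices with face sets $\{3,3,3,3,4\}$ and $\{3,3,3,4,4\}$ can be arbitrarily large (the paper's infinite $5$-regular examples in Figure~\ref{fig_infinite} consist entirely of them), so no bounded-radius local rule can repair the deficit without a global argument.

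The paper confronts exactly this difficulty. It distributes the face quantity $10-3i$ of an $i$-gon equally to its corners, obtaining a vertex contribution $c(v)=\sum_i (10-3a_{v,i})/a_{v,i}$ that must sum to $20$ by Lemma~\ref{lemma_20}. The only face sets with $c(v)>0$ are $\{3,3,3,3,4\}$ and $\{3,3,3,3,5\}$ (Lemma~\ref{lemma_non_negative}), and the entire third section is devoted to neutralising them: all edge-connected triangles and $60^{\circ}/120^{\circ}$-rhombi are gathered into maximal $\mathcal{TQ}$-classes, an identity relating five boundary parameters of such a class is established by a lengthy induction (Lemma~\ref{lemma_parameter} and Corollary~\ref{cor_parameter}), and a weight function on face arcs is then constructed to show that each class---and hence their union---has total contribution at most $0$ (Corollary~\ref{cor_non_negativ_all}). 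This non-local bookkeeping is precisely the work your outline leaves undone. Your convex-hull observation, while correct, plays no role in the paper's argument; the contradiction comes entirely from the $\mathcal{TQ}$-class analysis, not from any special treatment of extreme vertices or of the outer face.
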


\noindent
In order to prove Theorem \ref{thm_main} we denote the number of $i$-gons, i.~e.{} a face consisting of $i$ vertices, of a given matchstick graph $\mathcal{M}$ by $F_i$, where we also count the outer face. In the example in Figure \ref{fig_matchstick_graph} we have $F_3=3$, $F_4=2$, $F_7=1$, and $F_i=0$ for all other $i$.

\begin{figure}[htp]
\begin{center}
\includegraphics{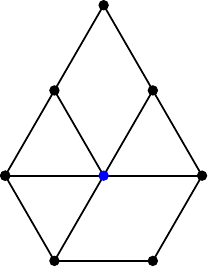}
\caption{A matchstick graph.}
\label{fig_matchstick_graph}
\end{center}
\end{figure}

\begin{lemma}
  \label{lemma_euler_sum}
  For a finite $5$-regular matchstick graph we have
  \[
    \sum_{i=3}^{\infty}(10-3i)F_i=F_3-2F_4-5F_5-8F_6-11F_7-\dots=20.
  \]
\end{lemma}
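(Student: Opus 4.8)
The plan is to combine Euler's polyhedron formula with the degree-sum (handshake) identity for a finite connected planar graph, applied to the planar embedding supplied by the matchstick graph. Since the statement counts the outer face among the $F_i$, I would work with the standard form $V - E + F = 2$, where $F = \sum_{i\ge 3} F_i$. First I would record the handshake identity: because every vertex has degree $5$, we get $2E = 5V$, hence $V = \frac{2E}{5}$. Next I would set up the face-edge incidence count. Every edge lies on the boundary of exactly two faces (the embedding is $2$-cell since the graph is connected and planar, and a matchstick graph has no crossings, so all faces are bona fide polygons with no repeated edges), and a face that is an $i$-gon is bounded by $i$ edges; summing over all faces gives $\sum_{i\ge 3} i\, F_i = 2E$. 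This is the one nonroutine ingredient, so I would be a little careful here: I should note that in a matchstick graph the graph is $2$-connected (it is $5$-regular with straight non-crossing edges, so in particular it has no bridges and no cut vertices once one observes a degree-$5$ vertex cannot be a leaf and a bridge would force a pendant-like configuration), which guarantees every face boundary is a simple cycle and the incidence count $\sum i F_i = 2E$ holds without correction terms.

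With these two identities in hand, the rest is algebra. From $V - E + F = 2$ I would multiply by a convenient constant to clear denominators — concretely, rewrite it as $10V - 10E + 10F = 20$. Substituting $10V = 2\cdot 5V = 2\cdot 2E = 4E$ (using $5V = 2E$) turns the left side into $4E - 10E + 10F = 10F - 6E$. Now substitute $2E = \sum_{i\ge 3} i F_i$, i.e. $6E = 3\sum_{i\ge 3} i F_i$, and $10F = \sum_{i\ge 3} 10 F_i$, to obtain
\[
  10F - 6E = \sum_{i=3}^{\infty} 10 F_i - \sum_{i=3}^{\infty} 3i F_i = \sum_{i=3}^{\infty}(10-3i)F_i,
\]
which therefore equals $20$. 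Expanding the first few terms gives exactly $F_3 - 2F_4 - 5F_5 - 8F_6 - 11F_7 - \dots = 20$, as claimed.

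The only point that needs genuine care — and hence the main obstacle — is justifying the face-edge incidence identity $\sum i F_i = 2E$, i.e. that no edge is counted on only one face and no face boundary reuses an edge. For a general connected planar graph this can fail at bridges, so I would explicitly invoke (or prove in a sentence) that a finite $5$-regular matchstick graph is bridgeless: a bridge $e=\{u,v\}$ would disconnect the graph into two nonempty parts, and since it is the unique edge between them, area or degree counting within the smaller part (which is itself a planar graph all of whose vertices have degree $5$ except possibly the endpoint, which has degree $\ge 4$ there) contradicts the Eulerian bound $m(r)=\infty$ for $r\ge 6$ only after more work — so more simply I would observe that in any plane graph the relation $\sum_i i F_i = 2E$ holds once we agree that an edge incident to the same face on both sides is counted twice for that face, which is automatically consistent with the polygon-count bookkeeping the paper uses (each $i$-gon face is, by definition in this paper, a face "consisting of $i$ vertices" and is traversed with $i$ edge-sides). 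Under that convention the identity is exact with no hypotheses, and the computation above is rigorous. Everything else is a one-line substitution, so I expect no further difficulty.
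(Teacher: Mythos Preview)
Your argument is correct and essentially the same as the paper's: both combine Euler's formula $V-E+F=2$ with $2E=\sum_i iF_i$ and the $5$-regularity relation (the paper phrases it as $5V=\sum_i iF_i$ via vertex--face incidences, you use the equivalent handshake form $5V=2E$), then clear denominators to obtain $\sum_i(10-3i)F_i=20$. Your extended discussion of bridges is unnecessary once you adopt the standard convention that an edge bordering the same face on both sides contributes twice to that face's length, which you do in the end.
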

\begin{proof}
  Due to the Eulerian polyhedral formula we have $V+F-E=2$, where $V$ denotes the number of vertices, $F$ the number
  of faces, and $E$ the number of edges. The number of faces is given by
  \[
    F=\sum_{i=3}^\infty F_i.
  \]
  Since every edge is part of two faces and every vertex is part of $5$ faces we have
  \[
    2E=\sum_{i=3}^{\infty} iF_i\quad\text{and}\quad
    5V=\sum_{i=3}^{\infty} iF_i.
  \]
  Inserting yields the proposed formula.
\end{proof}

\begin{definition}
  The \textbf{face set} $f(v)$ of a vertex $v$ contained in a planar graph is a multiset containing the number of corners
  of the adjacent faces.
\end{definition}

In the example in Figure \ref{fig_matchstick_graph} we have $f(v)=\{3,3,3,4,4\}$. For a $5$-regular matchstick graph the face sets $f(v)$ of the vertices have cardinality $5$, i.~e. there exist integers $a_{v,1},\dots,a_{v,5}\ge 3$ with $f(v)=\left\{a_{v,1},\dots,a_{v,5}\right\}$. Due to the angle sum of $2\pi$ at a vertex we have $a_{v,i}\ge 4$ for at least one index $i$ for every vertex $v\in V$.

\begin{definition}
  For an integer $a\ge 3$ we set $c(a)=\frac{10-3a}{a}$ and use this to define the \textbf{contribution} $c(v)$ of
  a vertex with face set $f(v)=\left\{a_{v,1},\dots,a_{v,5}\right\}$ as
  \[
    c(v)=c\left(\left\{a_{v,1},\dots,a_{v,5}\right\}\right)=\sum_{i=1}^5c\!\left(a_{v,i}\right)=
    \sum_{i=1}^5\frac{10-3a_{v,i}}{a_{v,i}}.
  \]
  For $U\subseteq V$ we set $c(U)=\sum\limits_{u\in U}c(u)$.
\end{definition}
 
Now we can relate this definition to Lemma \ref{lemma_euler_sum} and state two easy lemmas.

\begin{lemma}
  \label{lemma_20}
  If $\mathcal{M}$ is a (finite) $5$-regular matchstick graph we have $c(V)=20$.
\end{lemma}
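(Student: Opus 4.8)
The plan is to show that summing the vertex contributions $c(v)$ over all vertices $v \in V$ reproduces exactly the face-weighted sum appearing in Lemma~\ref{lemma_euler_sum}, which we already know equals $20$. The key observation is a double-counting identity: when we sum $c(v) = \sum_{i=1}^5 \frac{10 - 3a_{v,i}}{a_{v,i}}$ over all vertices, each face of the graph gets counted once for each of its corners. Concretely, a face that is an $i$-gon has exactly $i$ corners, hence contributes the term $\frac{10-3i}{i}$ exactly $i$ times in the total sum $\sum_{v} c(v)$, for a net contribution of $i \cdot \frac{10-3i}{i} = 10 - 3i$ from that single face.

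The steps I would carry out are: first, write $c(V) = \sum_{v \in V} \sum_{i=1}^5 c(a_{v,i})$ and reinterpret this as a sum over all (vertex, incident face) incidence pairs of the quantity $c(\text{size of that face})$. Second, reorganize the sum by grouping according to faces rather than vertices: for a fixed face $\varphi$ which is an $i$-gon, the pairs $(v, \varphi)$ with $v$ a corner of $\varphi$ number exactly $i$ (here the matchstick/planarity hypotheses, in particular that each face is a genuine simple polygon, guarantee the corner count of an $i$-gon is $i$). Third, conclude
\[
  c(V) = \sum_{\varphi} i_\varphi \cdot \frac{10 - 3 i_\varphi}{i_\varphi} = \sum_{\varphi} (10 - 3 i_\varphi) = \sum_{i=3}^{\infty} (10 - 3i) F_i,
\]
where $i_\varphi$ denotes the number of corners of face $\varphi$ and $F_i$ counts the $i$-gons including the outer face. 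Fourth, invoke Lemma~\ref{lemma_euler_sum} to equate this with $20$.

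The only point requiring care — and the main obstacle, such as it is — is the bookkeeping at the outer (unbounded) face and the implicit claim that every face of the embedded matchstick graph is incident to its boundary vertices the "expected" number of times, so that an $i$-gon really does account for $i$ incidences. Since Lemma~\ref{lemma_euler_sum} was derived using precisely the relation $\sum_i i F_i = 2E = 5V$, the consistency is automatic: the same incidence count $\sum_i i F_i$ that appears there is exactly what the double count produces here. So the proof is essentially a two-line reindexing of the sum already established, and I do not expect any genuine difficulty beyond stating the face-grouping carefully.
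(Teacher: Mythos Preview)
Your proposal is correct and matches the paper's own proof essentially line for line: the paper observes that $\left|\{a_{v,i}=j\mid v\in V,\,1\le i\le 5\}\right|=jF_j$ and then applies this to the definition of $c(V)$ to recover the left-hand side of Lemma~\ref{lemma_euler_sum}, which is precisely your face-grouping double count. There is nothing to add.
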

\begin{proof}
  For $j\ge 3$ we have $\left|\left\{a_{v,i}=j\mid v\in V,\, 1\le i\le 5\right\}\right|=jF_j$. Applying this
  to the definition of $c(V)$ yields the left hand side of the formula of Lemma \ref{lemma_euler_sum}.
\end{proof}

We would like to remark that for an infinite $5$-regular matchstick graph we would have $c(V)=0$.

\begin{lemma}
  \label{lemma_non_negative}
  The face sets with non-negative contribution are given by
  \begin{eqnarray*}
    c\left(\left\{3,3,3,3,4\right\}\right)=\frac{5}{6}, && c\left(\left\{3,3,3,4,4\right\}\right)=0,\\
    c\left(\left\{3,3,3,3,5\right\}\right)=\frac{1}{3}, && c\left(\left\{3,3,3,3,6\right\}\right)=0.
  \end{eqnarray*}
\end{lemma}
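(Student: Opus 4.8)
The plan is to reduce the statement to a short finite case analysis driven by the sign and monotonicity of the single-face contribution $c(a)=\frac{10-3a}{a}=\frac{10}{a}-3$. First I would record the elementary facts: $c(3)=\frac13>0$, $c(4)=-\frac12$, $c(5)=-1$, $c(6)=-\frac43$, and more generally $c(a)<0$ with $c(a)$ strictly decreasing in $a$ for integers $a\ge 3$ (since $\frac{10}{a}$ is decreasing). In particular $c(a)\le c(4)=-\frac12$ for every integer $a\ge 4$. Recall also the standing fact, already noted in the excerpt, that a vertex of a $5$-regular matchstick graph has a face set $\{a_{v,1},\dots,a_{v,5}\}$ with all $a_{v,i}\ge 3$ and at least one $a_{v,i}\ge 4$.

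Next I would stratify by $k$, the number of indices $i$ with $a_{v,i}=3$; since at least one entry is $\ge 4$ we have $0\le k\le 4$. Writing $c(v)=\frac{k}{3}+\big(\text{sum of the }5-k\text{ remaining terms}\big)$ and bounding each remaining term by $c(4)=-\frac12$ gives $c(v)\le \frac{k}{3}-\frac{5-k}{2}=\frac{5k-15}{6}$. For $k\in\{0,1,2\}$ this upper bound is negative, so no such face set has non-negative contribution; only $k=3$ and $k=4$ survive.

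For $k=4$ there is a single entry $b\ge 4$ and $c(v)=\frac43+c(b)$; non-negativity is equivalent to $c(b)\ge-\frac43$, i.e. $\frac{10}{b}\ge\frac53$, i.e. $b\le 6$, leaving exactly $b\in\{4,5,6\}$, which yield $c(v)=\frac56,\ \frac13,\ 0$ respectively. For $k=3$ there are two entries $b_1,b_2\ge 4$ and $c(v)=1+c(b_1)+c(b_2)$; since $c(b_1),c(b_2)\le-\frac12$, non-negativity forces $c(b_1)=c(b_2)=-\frac12$, hence $b_1=b_2=4$ and $c(v)=0$. Collecting the surviving cases gives precisely the four face sets $\{3,3,3,3,4\}$, $\{3,3,3,3,5\}$, $\{3,3,3,3,6\}$, $\{3,3,3,4,4\}$ with the stated contributions, completing the proof.

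There is no real obstacle here beyond bookkeeping; the only point requiring slight care is to make the crude bound $c(a)\le c(4)$ do all the work in the $k\le 2$ cases, so that no further subdivision by the actual values of the large entries is needed, and to check the strict monotonicity of $c$ so that the inequality $c(b)\ge-\frac43$ really does cut off at $b=6$ rather than admitting larger values.
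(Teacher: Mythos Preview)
Your argument is correct. The paper states this lemma without proof, treating it as an easy direct verification; your stratification by the number of triangles and use of the monotonicity of $c(a)=\frac{10}{a}-3$ with the bound $c(a)\le -\tfrac12$ for $a\ge 4$ is exactly the kind of short finite check the paper is implicitly leaving to the reader.
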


\noindent
Our strategy for proving Theorem \ref{thm_main} will be to partition the vertex set $V$ into subsets each having non-positive contribution, which yields a contradiction to Lemma \ref{lemma_20}.

To determine a suitable partition of $V$ into subsets for a given matchstick graph $\mathcal{M}$ we consider the set $\mathcal{TQ}$ of triangles and quadrangles with internal angles in $\left\{\frac{1}{3}\pi,\frac{2}{3}\pi\right\}$ and define an equivalence relation $\sim$ on $\mathcal{TQ}$. Whenever there exist $x,y\in\mathcal{TQ}$ sharing an edge we require $x\sim y$. We complete this relation to an equivalence relation by taking the transitive closure.

Since we prove that no finite $5$-regular matchstick graphs exists it is impossible to give examples to illustrate our definitions. Therefore we define them (mostly) for matchstick graphs $\mathcal{M}$ where the degrees of its vertices
are at most $5$.

\begin{definition}
  For a given (possible infinite) matchstick graph $\mathcal{M}$ with vertex degrees at most $5$ we call an
  equivalence class $x\sim =\left\{y\in\mathcal{TQ}\mid y\sim x\right\}$ of the above defined equivalence
  relation a $\mathcal{TQ}$-class.
\end{definition}

So a $\mathcal{TQ}$-class is an edge-connected union of triangles and quadrangles whose vertices are situated on a suitable common regular triangular lattice. In Figure \ref{fig_honeycomb_components} we have depicted an example, where we have marked the $\mathcal{TQ}$-classes by different face colors.

\begin{figure}[htp]
\begin{center}
\includegraphics{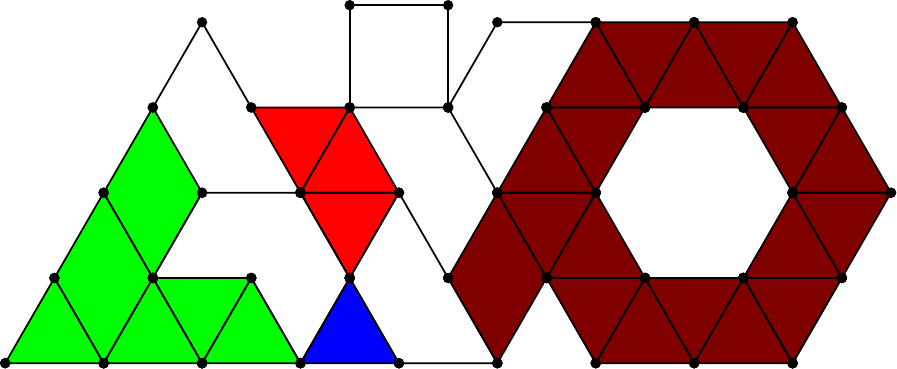}
\caption{$\mathcal{TQ}$-classes of a matchstick graph.}
\label{fig_honeycomb_components}
\end{center}
\end{figure}

\noindent
In the next lemma we summarize some easy facts on $\mathcal{TQ}$-classes.

\begin{lemma}
  Let $\mathcal{B}$ be a $\mathcal{TQ}$-class, then the following holds:
  \begin{itemize}
   \item[(1)] The faces of $\mathcal{B}$ are edge-to-edge connected, meaning that the dual graph is connected.
   \item[(2)] The vertices of $\mathcal{B}$ are situated on a suitable regular triangular lattice.
  \end{itemize}
\end{lemma}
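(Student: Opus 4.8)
The plan is to establish both assertions by tracking how a $\mathcal{TQ}$-class sits inside a common regular triangular lattice, arguing vertex-by-vertex along edge-adjacent faces. The key observation is that a single triangle with internal angles in $\left\{\frac{1}{3}\pi,\frac{2}{3}\pi\right\}$ must in fact be equilateral (the angles sum to $\pi$, forcing all three to equal $\frac{1}{3}\pi$), and a quadrangle with such angles, summing to $2\pi$, must be a unit rhombus with angles $\frac{1}{3}\pi,\frac{2}{3}\pi,\frac{1}{3}\pi,\frac{2}{3}\pi$; both shapes are therefore ``lattice pieces'' of the standard triangular lattice $\Lambda$ generated by two unit vectors at angle $\frac{1}{3}\pi$. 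So any single face of $\mathcal{B}$ embeds isometrically into $\Lambda$.

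For part (2), I would proceed by induction on the number of faces of $\mathcal{B}$, using the fact that $\mathcal{B}$ is by construction obtained from the transitive closure of the edge-sharing relation, hence one can order its faces $Q_1,\dots,Q_k$ so that each $Q_{j+1}$ shares at least one edge with $Q_1\cup\dots\cup Q_j$. The base case is the single-face observation above. For the inductive step, suppose $Q_1\cup\dots\cup Q_j$ has all vertices on a copy of $\Lambda$; the shared edge $e$ of $Q_{j+1}$ is then a lattice edge, and since $Q_{j+1}$ is either an equilateral triangle or a unit rhombus on its own lattice, and that lattice is uniquely determined by any one of its edges together with the constraint that it be the standard triangular lattice through that edge, the lattice of $Q_{j+1}$ must coincide with $\Lambda$. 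The only subtlety is that a rhombus built on edge $e$ could a priori point to ``either side,'' but the non-crossing condition (4) in the matchstick definition together with the planar embedding pins down which side, and in any case both candidate positions have their vertices in $\Lambda$. Hence all vertices of $Q_{j+1}$ lie in $\Lambda$, completing the induction. (Injectivity of the embedding is automatic from the matchstick hypothesis, so distinct lattice points genuinely carry distinct vertices.)

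For part (1), I would note that it is essentially a restatement of how the equivalence relation $\sim$ was defined: two faces are in the same $\mathcal{TQ}$-class precisely when they are connected through a chain of faces each consecutive pair of which shares an edge. That is exactly the statement that the dual graph of $\mathcal{B}$ — with a node per face of $\mathcal{B}$ and an edge whenever two faces share an edge of $\mathcal{M}$ — is connected. One should just check that ``sharing an edge'' in the sense used to generate $\sim$ coincides with ``being edge-to-edge adjacent'': since all faces of $\mathcal{B}$ live on the common lattice $\Lambda$ by part (2), and the edges of these faces are lattice edges, a shared edge is a genuine full common edge (not, say, an edge of one face overlapping part of an edge of another), so edge-to-edge adjacency is the right notion and the dual graph is connected.

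I expect the main obstacle to be the geometric rigidity argument in part (2): making precise that an equilateral triangle or unit rhombus attached along a prescribed lattice edge $e$ has no freedom to break away from $\Lambda$ — i.e.\ that the angle constraints plus the fixed edge $e$ force the new vertices onto $\Lambda$ — and handling the rhombus's two-sided ambiguity cleanly using the planarity and non-crossing conditions. Everything else reduces to the elementary angle-sum computations for triangles and quadrangles and to unwinding the definition of the transitive closure; I would keep those brief.
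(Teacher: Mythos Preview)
The paper does not actually supply a proof of this lemma: it is introduced as ``some easy facts on $\mathcal{TQ}$-classes'' and stated without argument. So there is no route to compare yours against; one can only judge your proposal on its own merits.

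Your proposal is correct and does exactly what the author presumably had in mind but omitted. Part~(1) is, as you say, immediate from the definition of $\sim$ as the transitive closure of edge-sharing; in a matchstick graph every edge has unit length, so ``sharing an edge'' already means sharing a full edge, and you do not need to appeal to part~(2) for that. For part~(2) your angle-sum computations are right: the triangle must be equilateral, and the unit-edge quadrangle must have two angles of each size; a short closure computation (which you could include) shows the $\tfrac{2\pi}{3}$ angles must be opposite, so the quadrangle is the standard $60^\circ$--$120^\circ$ rhombus. Your inductive attachment argument is the natural one, and the key geometric fact---that a regular triangular lattice is determined by any one of its edges---is correct and pins down the lattice uniquely once the first face is placed. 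The ``two-sided ambiguity'' you flag is harmless, as you yourself note: every candidate position of the new face has its vertices on $\Lambda$ regardless of which side or which rhombus orientation is realised, so the planarity/non-crossing conditions are not actually needed here. In short, the obstacle you anticipated is not an obstacle, and the write-up can be kept brief.
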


\noindent
For brevity we associate with a $\mathcal{TQ}$-class $\mathcal{B}$ the set of its vertices $V(\mathcal{B})$ and the set of its edges $E(\mathcal{B})$ so that we can utilize the notation $c(\mathcal{B})$ for the contribution of the vertex set of $\mathcal{B}$. Now our aim is to show that the contribution $c\!\left(\mathcal{B}\right)$ for every $\mathcal{TQ}$-class $\mathcal{B}$ of a $5$-regular matchstick graph is at most zero. Since some vertices may be contained in more than one $\mathcal{TQ}$-class the corresponding vertex sets can not be used directly to partition the vertex set $V$. Instead we choose the union $\mathcal{C}=\cup_i\mathcal{B}_i$ of all $\mathcal{TQ}$-classes. Here every vertex $v\in V$ occurs at most once in $\mathcal{C}$, nevertheless it may be contained in several $\mathcal{TQ}$-classes $\mathcal{B}_i$. In order to prove $c(\mathcal{C})\le 0$ we have to perform some bookkeeping during the proof of $c(\mathcal{B})\le 0$, which will be the topic of the next section. Once we haven proven this we can state:

\bigskip

\noindent
\textbf{Proof of the main theorem.} Let $\mathcal{M}$ be a finite $5$-regular matchstick graph and $\mathcal{C}$ be the union of all $\mathcal{TQ}$-classes of $\mathcal{M}$. Using Corollary \ref{cor_non_negativ_all} and the fact that all vertices with positive contribution are contained in a $\mathcal{TQ}$-class, see Lemma \ref{lemma_non_negative}, we conclude
\[
  c(V)\,\,\,\,=\,\,\,\,c(\mathcal{C})+\sum_{v\in V\backslash\mathcal{C}} c(v)\,\,\,\,\le\,\,\,\, 0+
  \sum_{v\in V\backslash\mathcal{C}}0\,\,\,\,=\,\,\,\,0.
\]
This contradicts Lemma \ref{lemma_20}.
\hfill{$\square$}

\section{Contribution of $\mathcal{TQ}$-classes of $5$-regular matchstick graphs}

\noindent
In this section we want to study the contribution $c(\mathcal{B})$ of a finite $\mathcal{TQ}$-class $\mathcal{B}$ in a $5$-regular matchstick graph $\mathcal{M}$. Since we show that no finite $5$-regular matchstick graph exists $\mathcal{M}$ has to be infinite in this context. Nevertheless there may be some finite $\mathcal{TQ}$-classes in $\mathcal{M}$.

But since we are only interested in finite matchstick graphs we introduce another concept and consider incomplete finite parts of $5$-regular matchstick graphs.

\begin{definition}
  Let $\mathcal{M}$ be a finite matchstick graph with maximum vertex degree at most $5$ and $\mathcal{B}$ a
  $\mathcal{TQ}$-class of $\mathcal{M}$ which induces a planar graph $\mathcal{G}$ containing the vertices, edges,
  and faces of $\mathcal{B}$. We call a vertex $v$ in $\mathcal{G}$ (or $\mathcal{B}$) an \textbf{inner vertex} if
  all faces being adjacent to $v$ in $\mathcal{G}$ are contained in $\mathcal{B}$. All other vertices $v$ in
  $\mathcal{G}$ (or $\mathcal{B}$) are called \textbf{outer vertices}. Now we can call the $\mathcal{TQ}$-class
  $\mathcal{B}$ \textbf{prospective $\mathbf{5}$-regular} exactly if all inner vertices of $\mathcal{G}$ have
  degree $5$ and all outer vertices of $\mathcal{G}$ have degree at most $5$.
\end{definition}

\begin{figure}[htp]
\begin{center}
\includegraphics{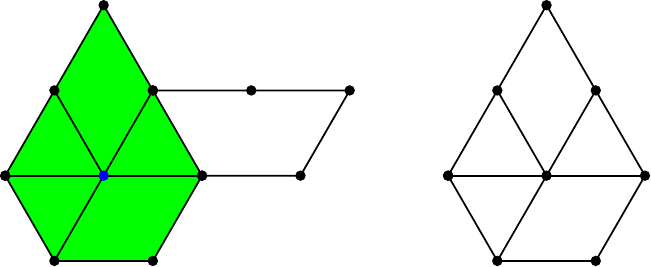}
\caption{A prospective $5$-regular $\mathcal{TQ}$-class $\mathcal{B}$
of a matchstick graph $\mathcal{M}$
and its induced planar graph $\mathcal{G}$
.}
\label{fig_prospective}
\end{center}
\end{figure}

\noindent
In Figure \ref{fig_prospective} we have depicted a matchstick graph with maximum vertex degree at most $5$ on the left hand side. The faces of the, in this case uniquely, contained $\mathcal{TQ}$-class $\mathcal{B}$ are filled with green color. If we build up a planar graph out of the vertices, edges, and faces of $\mathcal{B}$ we obtain the right hand side of Figure \ref{fig_prospective}. In $\mathcal{G}$ vertex $v$ is the only vertex which is not adjacent to the outer
face. Thus $v$ is an inner vertex in $\mathcal{B}$ and the remaining $7$ vertices of $\mathcal{B}$ are outer vertices.

\medskip

Since every $\mathcal{TQ}$-class $\mathcal{B}$ of a given finite $5$-regular matchstick graph $\mathcal{M}$ is prospective $5$-regular, we now study prospective $5$-regular $\mathcal{TQ}$-classes. Therefore we want to specify them by some parameters $\sigma$, $k$, $\tau$, $b_1$ and $b_2$. (For brevity we forego to use notations as $\sigma(\mathcal{B})$, \dots whenever the corresponding $\mathcal{TQ}$-class is evident from the context.)

\medskip

The parameter $\sigma$ stands for the contribution of the faces of $\mathcal{B}$. To become more precisely we have to introduce a new technical notation. A given vertex $v$ is adjacent to five faces denoted as $(v,1),\dots,(v,5)$, where face $(v,i)$ is an $a_{v,i}$-gon. Since there exist pairs of vertices $u\neq v$ and indices $1\le i,j\le 5$ with $(v,i)=(u,j)$ we introduce the notation $[v,i]$ addressing the arc of face $(v,i)$ at vertex $v$. The contribution of such a face arc $[v,i]$ is defined to be $c\!\left(a_{v,i}\right)$. If $\mathcal{A}$ is the set of all face arcs $[v,i]$ where the face $(v,i)$ is contained in $\mathcal{B}$, then $\sigma$ is given by $\sum\limits_{\alpha\in\mathcal{A}}c(\alpha)$.

\medskip

Let us look at the four $\mathcal{TQ}$-classes of Figure \ref{fig_honeycomb_components} (it is easy to check that they are all prospective $5$-regular). The blue $\mathcal{TQ}$-class consists of a single triangle. So it has three \textit{triangle}-arcs and a contribution of $\sigma_{\text{blue}}=\frac{1}{3}+\frac{1}{3}+\frac{1}{3}=1$. It is easy to figure out that in general a triangle contributes $1$ and a quadrangle contributes $-2$ to $\sigma$ so that we have $\sigma_{\text{red}}=3$, $\sigma_{\text{green}}=0$, and $\sigma_{\text{brown}}=16$.

We have already remarked that every $e\in E$ edge of a planar graph is contained in two faces. Whenever both faces are contained in a given $\mathcal{TQ}$-class $\mathcal{B}$ we say that $e$ is an inner edge. If only one face is contained in $\mathcal{B}$ we call $e$ an outer edge. By $k$ we count the outer edges of a $\mathcal{TQ}$-class $\mathcal{B}$. In our example we have $k_{\text{blue}}=3$, $k_{\text{red}}=5$, $k_{\text{green}}=10$, and $k_{\text{brown}}=20$.

If we consider the vertices and edges of a prospective $5$-regular $\mathcal{TQ}$-class as a subgraph, then every vertex $v$ has a degree $\delta(v)$ at most $5$. By $\tau(v)$ we denote the gap $5-\delta(v)$ (free valencies) and by $\tau(\mathcal{B})$ the sum over all $\tau(v)$ where $v$ is in $\mathcal{B}$. So in our example we have $\tau_{\text{blue}}=9$, $\tau_{\text{red}}=11$ , $\tau_{\text{green}}=20$ , and $\tau_{\text{brown}}=22$.

In a $5$-regular planar graph every vertex $v$ is contained in exactly five different faces $(v,i)$. Whenever all five faces are contained in a given $\mathcal{TQ}$-class $\mathcal{B}$ we say that $v$ in $\mathcal{B}$ is an inner vertex. If the number of faces $(v,i)$ which are contained in $\mathcal{B}$ is between one and four we call $v$ an outer vertex. This coincides with our previous definition of inner and outer vertices of a prospective $5$-regular $\mathcal{TQ}$-class. Clearly we have $\tau(v)=0$ for all inner vertices $v$. If $e=\{v,u\}$ is an outer edge in $\mathcal{B}$ then $v$ and $u$ are outer vertices of $\mathcal{B}$. For the other direction we have that for an outer vertex $v$ in $\mathcal{B}$ there exist at least two outer edges $e_1$, $e_2$ in $\mathcal{B}$ being adjacent to $v$. More precisely the number of outer edges being adjacent to $v$ is either $2$ or $4$.

\begin{figure}[htp]
\begin{center}
\includegraphics{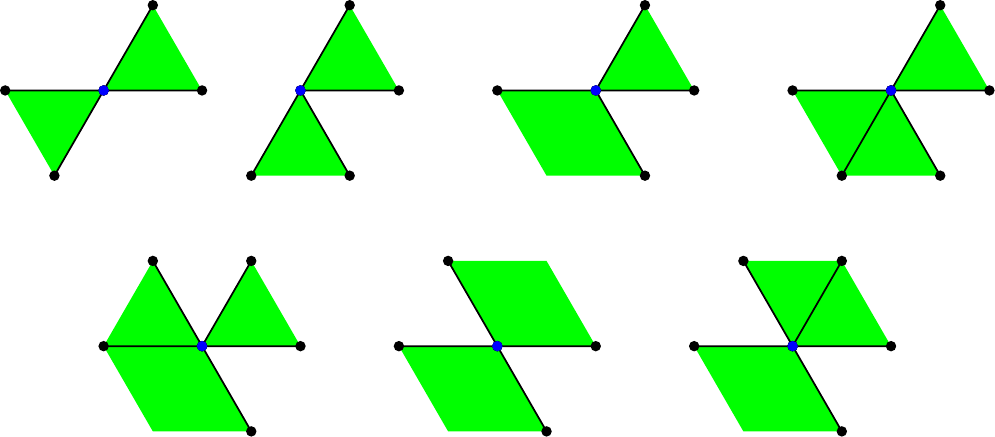}
\caption{Vertices of special type.}
\label{fig_special_situation}
\end{center}
\end{figure}

For a given vertex $v$ in $\mathcal{B}$ we consider the faces $(v,i)$ which are contained in a given $\mathcal{TQ}$-class $\mathcal{B}$ as vertices of a graph $H$. Two vertices of $H$ are connected via an edge whenever the corresponding faces have a common edge in $\mathcal{M}$. If the graph $H$ is connected we call $v$ of normal type. Otherwise we say that vertex $v$ is of special type. In Figure \ref{fig_special_situation} we depict all cases of vertices of special type up to symmetry. Here vertex $v$ is marked blue and the faces of $\mathcal{B}$ are marked green. Alternatively we could also define a vertex of special type as an outer vertex of a prospective $5$-regular $\mathcal{TQ}$-class which is adjacent to exactly $4$ outer edges of $\mathcal{B}$. We would like to remark that from a local point of view at a vertex of special type it seems that the faces of the $\mathcal{TQ}$-class are not edge connected.

\medskip

By $b_1$ we count the number of vertices $v$ in $\mathcal{B}$ of special type.  In our example of Figure \ref{fig_honeycomb_components} no vertex of special type exists and we have $b_1=0$. In Figure \ref{fig_honeycomb_components_2} we have depicted another example of a $\mathcal{TQ}$-class, where we have $b_1=1$ -- the blue vertex.

\begin{figure}[htp]
\begin{center}
\includegraphics{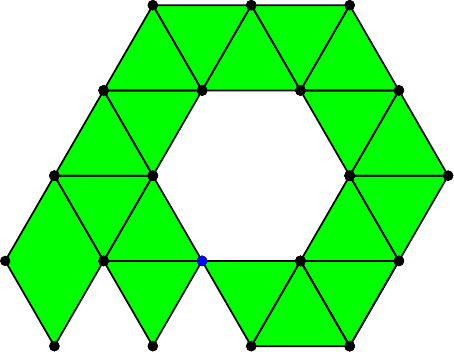}
\caption{Another $\mathcal{TQ}$-class of a matchstick graph.}
\label{fig_honeycomb_components_2}
\end{center}
\end{figure}

If a face $f\in\mathcal{B}$ of a $\mathcal{TQ}$-class is a quadrangle then there are two inner angles of $\frac{2\pi}{3}$ and two inner angles of $\frac{\pi}{3}$. By $b_2$ we count the number of inner angles of $\frac{2\pi}{3}$ where the corresponding vertex $v$ is an outer vertex. It may happen that an outer vertex
$v$ is part of two quadrangles in $\mathcal{B}$ each having an inner angle of $\frac{2}{3}\pi$ at $v$. Thus
for a prospective $5$-regular $\mathcal{TQ}$-class $\mathcal{B}$ we may write $b_2(v)\in\{0,1,2\}$ for the
number of inner angles of $\frac{2\pi}{3}$ at an outer vertex $v$. For inner vertices we set $b_2(v)=0$ so
that we can set $b_2=\sum_{v\in V(\mathcal{B})}b_2(v)$. In the example
of Figure \ref{fig_honeycomb_components} we have $b_2=4$ in the green class, $b_2=2$ in the brown class,
and $b_2=0$ in the red and blue class. We remark that due to an angle sum of $2\pi$ there is exactly one 
face with an inner angle of $\frac{2\pi}{3}$ at every inner vertex of $\mathcal{B}$.

\medskip

Now we want to prove the equation
\begin{equation}
  \label{eqn_paramater}
  \sigma-k+\frac{\tau-k}{3}+\frac{5}{3}b_1+\frac{5}{3}b_2=0.
\end{equation}
relating the parameters $\sigma$, $k$, $\tau$, $b_1$ and $b_2$. This equation even holds for more general objects
than prospective $5$-regular $\mathcal{TQ}$-classes. A $\mathcal{TR}$-class $\mathcal{B}$ is a union of
triangles and quadrangles with inner angles in $\left\{\frac{\pi}{3},\frac{2\pi}{3}\right\}$ on a regular triangular grid
such that each inner vertex has degree $5$ and each outer vertex has degree at most $5$. Clearly we can transfer the
definitions of the parameters $\sigma$, $k$, $\tau$, $b_1$ and $b_2$ from (prospective $5$-regular) $\mathcal{TQ}$-classes to $\mathcal{TR}$-classes. The crucial difference between $\mathcal{TQ}$-classes and $\mathcal{TR}$-classes is that the dual graph of a $\mathcal{TR}$-class (considered as a planar graph) is not connected in general.

\begin{figure}[htp]
\begin{center}
\includegraphics{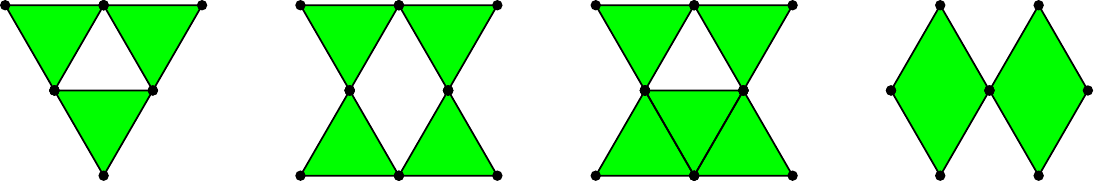}
\caption{Examples of $\mathcal{TR}$-classes.}
\label{fig_tr_classes}
\end{center}
\end{figure}

In Figure \ref{fig_tr_classes} we depict some extraordinary examples of $\mathcal{TR}$-classes to demonstrate the whole variety being covered by the definition of a $\mathcal{TR}$-class. Here the faces of a $\mathcal{TR}$-class $\mathcal{B}$ are filled in green. Those faces which are not contained in $\mathcal{B}$ are left in white. In the left example of Figure \ref{fig_tr_classes} we have $\sigma=3$, $k=9$, $\tau=12$, $b_1=3$, and $b_2=0$. In the second example we have $\sigma=4$, $k=12$, $\tau=16$, $b_1=4$, and $b_2=0$. In the third example of Figure \ref{fig_tr_classes}
we have $\sigma=5$, $k=11$, $\tau=14$, $b_1=3$, and $b_2=0$. For the central vertex $v$ of the right example of Figure \ref{fig_tr_classes} we have $b_2(v)=2$. Additionally $v$ is of special type. The parameters are given by $\sigma=-4$, $k=8$, $\tau=19$, $b_1=1$, and $b_2=4$. Thus in all four cases Equation (\ref{eqn_paramater}) is valid. If we consider those four examples as a single example on the same triangular grid again Equation (\ref{eqn_paramater}) is valid.

\begin{lemma}
  \label{lemma_parameter}
  For a $\mathcal{TR}$-class $\mathcal{B}$ Equation~(\ref{eqn_paramater}) is valid.
\end{lemma}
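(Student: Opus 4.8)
The plan is to turn Equation~(\ref{eqn_paramater}) into a transparent combinatorial identity and then prove that identity by two double counts. Write $t$ and $q$ for the numbers of triangles and quadrangles of $\mathcal{B}$, set $n=|V(\mathcal{B})|$ and $e=|E(\mathcal{B})|$, and let $n_i,n_o$ denote the numbers of inner and of outer vertices of $\mathcal{B}$, so $n=n_i+n_o$.

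The first step is to express $\sigma$ and $\tau$ through $t,q,n,e,k$. Each arc of a triangle contributes $c(3)=\tfrac13$ and each arc of a quadrangle contributes $c(4)=-\tfrac12$, so a triangle contributes $1$ and a quadrangle $-2$ to $\sigma$, whence $\sigma=t-2q$. Counting incidences between the faces of $\mathcal{B}$ and their bounding edges, an inner edge is counted twice and an outer edge once, so $3t+4q=2e-k$; since $\tau=\sum_{v\in V(\mathcal{B})}(5-\delta(v))=5n-2e$ by the handshake lemma, this gives $\tau=5n-3t-4q-k$. Substituting both expressions into Equation~(\ref{eqn_paramater}) and multiplying by $3$, the $t$-terms cancel and the whole relation divides by $5$, leaving
\[
  n+b_1+b_2=k+2q .
\]
Hence it suffices to prove this identity.

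The identity follows from two independent counts. First, every quadrangle of $\mathcal{B}$ has exactly two inner angles of $\tfrac{2\pi}{3}$, so the number of pairs (quadrangle of $\mathcal{B}$, incident vertex) carrying a $\tfrac{2\pi}{3}$-angle equals $2q$; at an inner vertex the faces of $\mathcal{B}$ fill the full angle $2\pi$ with all angles in $\{\tfrac{\pi}{3},\tfrac{2\pi}{3}\}$, which as remarked above forces exactly one $\tfrac{2\pi}{3}$-angle there, while by definition the outer vertices carry $b_2$ such angles in total; summing over all vertices yields $n_i+b_2=2q$. Second, every outer edge has both of its endpoints among the outer vertices, so the number of incidences between outer edges and their endpoints is $2k$; counting the same incidences vertex by vertex and using that an outer vertex is incident to exactly $2$ or $4$ outer edges, with the value $4$ occurring precisely at the $b_1$ vertices of special type, one obtains $2k=2(n_o-b_1)+4b_1=2n_o+2b_1$, i.e.\ $k=n_o+b_1$. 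Adding the two relations, $n+b_1+b_2=(n_i+b_2)+(n_o+b_1)=2q+k$, which is the desired identity; reversing the substitution gives back Equation~(\ref{eqn_paramater}).

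The only point requiring genuine case analysis — one I would check against the classification of special vertices in Figure~\ref{fig_special_situation} — is the claim that an outer vertex of $\mathcal{B}$ is incident to exactly $2$ or $4$ outer edges, with $4$ occurring exactly at vertices of special type; this is where the local geometry on the triangular grid enters. Everything else is Euler-type incidence counting that at no point uses connectedness of $\mathcal{B}$ or of its dual graph, which is precisely why the argument applies verbatim to arbitrary $\mathcal{TR}$-classes and not only to (prospective $5$-regular) $\mathcal{TQ}$-classes.
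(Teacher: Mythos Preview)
Your proof is correct and takes a genuinely different route from the paper. The paper proceeds by induction on the number of faces: it checks the base cases (a single triangle, a single quadrangle, the empty class) and then verifies, through an exhaustive case analysis, that Equation~(\ref{eqn_paramater}) is preserved when one adjoins a new triangle (eight main cases, Figure~\ref{fig_adding_a_triangle}) or a new quadrangle (nineteen main cases, Figure~\ref{fig_adding_a_quadrangle}), tabulating the increments $\Delta(\sigma),\Delta(k),\Delta(\tau),\Delta(b_1)+\Delta(b_2)$ in each case. Your argument bypasses all of this: you reduce the equation algebraically to the clean identity $n+b_1+b_2=k+2q$ and then prove that by two short double counts (one on $\tfrac{2\pi}{3}$-angles, one on outer-edge/outer-vertex incidences), each of which relies on a single local fact already stated in the paper (exactly one $\tfrac{2\pi}{3}$-angle at each inner vertex; an outer vertex meets $2$ or $4$ outer edges, with $4$ characterising special type). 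Your approach is shorter, more conceptual, and exposes the combinatorial content of Equation~(\ref{eqn_paramater}); the paper's inductive bookkeeping, on the other hand, yields as a by-product precise information about how the parameters change under local face additions, which is not visible from the double count. Your caveat about the $2$-or-$4$ claim is well placed: since outer edges at $v$ are edges of $\mathcal{B}$, their number is at most $\delta(v)\le 5$, is even because it counts transitions around the cyclic face pattern at $v$, and is positive since $v$ is outer --- so indeed only $2$ and $4$ occur, and $4$ is exactly the special-type case.
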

\begin{proof}
  We prove by induction on the number of faces of $\mathcal{B}$. For a single triangle we have $\sigma=1$, $k=3$,
  $\tau=9$, and $b_1=b_2=0$. Thus the left hand side of (\ref{eqn_paramater}) sums to zero. For a quadrangle we
  have $\sigma=-2$, $k=4$, $\tau=12$, $b_1=0$, and $b_2=2$, again summing up to zero in (\ref{eqn_paramater}). We
  remark that Equation~(\ref{eqn_paramater}) is also valid for an empty $\mathcal{TR}$-class.

  For the induction step we now
  assume that Equality (\ref{eqn_paramater}) is valid and we show that it remains valid after adding another
  single triangle or quadrangle to $\mathcal{B}$. Here we only consider those additions which do not alter
  faces of $\mathcal{B}$ (one might think of inserting an edge into a quadrangle of $\mathcal{B}$ resulting in
  two triangles and destroying the initial quadrangle). In general it might happen that adding the edges of one
  new face produces a second new face in one step. An example arises by adding an edge into a quadrangle which
  is not contained in $\mathcal{B}$, see the second and the third example of Figure \ref{fig_tr_classes}.
  Here, in one step we only add one of the resulting triangles to $\mathcal{B}$. If the other triangle should also
  be added to $\mathcal{B}$ we do this in a second step where we only have a new face and no new vertices or edges.

  \begin{figure}[htp]
\begin{center}
\includegraphics{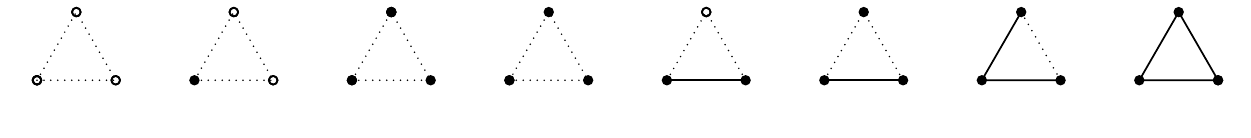}
\caption{Adding a triangle.}
\label{fig_adding_a_triangle}
\end{center}
\end{figure}

  Adding a triangle results in eight (main) cases, see Figure \ref{fig_adding_a_triangle}, and adding a quadrangle
  results in nineteen (main) cases, see Figure \ref{fig_adding_a_quadrangle}.  In the different (main) cases we
  depict the new edges by dotted lines, the new vertices by empty circles, and the old edges and vertices in black.
  We have a closer look on the change of the
  parameters. If $p$ is the parameter of $\mathcal{B}$ and $p'$ the corresponding parameter after adding a triangle
  (or a quadrangle), then we denote the change by $\Delta(p):=p'-p$. In order to prove the induction step it
  suffices to verify
  \begin{equation}
    \label{eqn_paramater_delta}
    \Delta(\sigma)-\Delta(k)+\frac{\Delta(\tau)-\Delta(k)}{3}+\frac{5}{3}\Delta\!\left(b_1\right)+
    \frac{5}{3}\Delta\!\left(b_2\right)= 0.
  \end{equation}

  We remark that all black edges in the (main) cases of Figure \ref{fig_adding_a_triangle} and Figure
  \ref{fig_adding_a_quadrangle} are outer edges before adding the new triangle or quadrangle. After adding the
  new face the black edges become inner edges and the dotted edges become outer edges.

  \begin{table}[!ht]
  \begin{center}
  \begin{tabular}{r|r|r|r|r}
    (main) case & $\Delta(\sigma)$ & $\Delta(k)$ & $\Delta(\tau)$ & $\Delta\!\left(b_1\right)+\Delta\!\left(b_2\right)$\\
    \hline
    (1) & 1 &  3 &  9 &  0\\
    (2) & 1 &  3 &  4 &  1\\
    (3) & 1 &  3 & -1 &  2\\
    (4) & 1 &  3 & -6 &  3\\
    (5) & 1 &  1 &  1 &  0\\
    (6) & 1 &  1 & -4 &  1\\
    (7) & 1 & -1 & -2 & -1\\
    (8) & 1 & -3 &  0 & -3\\
  \end{tabular}
  \caption{$\Delta(\cdot)$-values for the eight (main) cases of Figure \ref{fig_adding_a_triangle}.}
  \label{table_delta_triangles}
  \end{center}
  \end{table}

  In order to shorten our calculation and case distinction we have a look at the situation from a vertex point
  of view. For a fixed vertex $a$ (in Figure \ref{fig_adding_a_triangle}) up to symmetry there are four different
  (vertex) cases: If $a$ is a new (empty) vertex then also the two adjacent edges of the new triangle have to be
  dotted. Otherwise if $a$ is an old (black) vertex the number of dotted adjacent edges of the new triangle can be two,
  one, or zero. These four possibilities show up as (main) cases (1), (2), (5), and (7) in Figure
  \ref{fig_adding_a_triangle} and we denote them as (vertex) cases (i), (ii), (iii), and (iv). Clearly, also for
  vertex $b$ and vertex $v$ we have the same four possibilities. So we restrict our considerations on vertex $a$.

  \begin{figure}[htp]
\begin{center}
\includegraphics{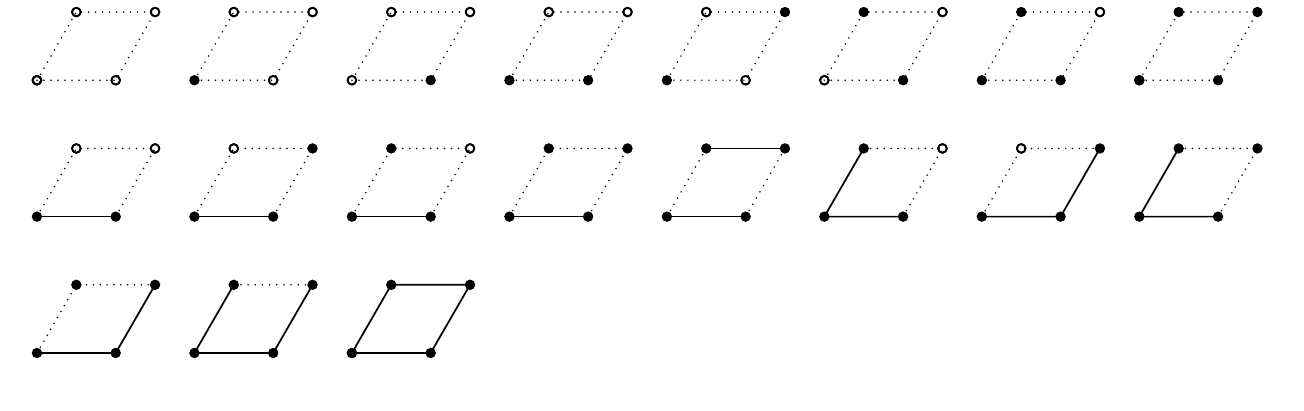}
\caption{Adding a quadrangle.}
\label{fig_adding_a_quadrangle}
\end{center}
\end{figure}

  By $a_1$ we denote the vertex $a$ before the addition of the new face and by $a_2$ we denote the vertex $a$ after
  the addition of the new face.
  In (vertex) case (i), occurring in (main) case (1), vertex $a_1$ does not exist and $a_2$ is an outer vertex.
  Obviously $a_2$ is of normal type and $b_2\!\left(a_2\right)=0$. In (vertex) case (ii), occurring in (main) case
  (2) both $a_1$ and $a_2$ are outer vertices. We have $b_2\!\left(a_1\right)
  =b_2\!\left(a_2\right)$. Since $a$ is adjacent to two red edges of the new face vertex $a_2$ is of special type
  and $a_1$ is of normal type, so that the $b_1$-count increases by one. In (vertex) case (iii) 
  both $a_1$ and $a_2$ are outer vertices. So $b_2\!\left(a_1\right)=b_2\!\left(a_2\right)$ and
  $a_1$ is of special type if and only if $a_2$ is of special type. Thus the $b_1$- and the $b_2$-counts
  do not change. In (vertex) case (iv) 
  two different things could happen at vertex $a$. At first we mention that $a_1$ is an outer
  vertex. If also $a_2$ is an outer vertex then $a_1$ is of special type and $a_2$ is of normal type. In this
  situation we have $b_2\!\left(a_1\right)=b_2\!\left(a_2\right)$. The other possibility is that $a_2$ is an inner
  vertex. Since the degree of $a_2$ is $5$ this is only possible if $b_2\!\left(a_1\right)=1$ and if $a_1$ is of
  normal type. And since $a_2$ is an inner vertex it is of normal type and we have $b_2\!\left(a_2\right)=0$. Thus
  the sum of the $b_1$-count and the $b_2$-count decreases by one.

  With the above we are able to give the $\Delta(\cdot)$-values for the (main) cases of Figure
  \ref{fig_adding_a_triangle} in Table \ref{table_delta_triangles}. It is easy to check that in all eight (main) cases
  Equation~(\ref{eqn_paramater_delta}) is valid.

\medskip

\begin{table}[ht]
  \begin{center}
  \begin{tabular}{r|r|r|r|r}
    (main) case & $\Delta(\sigma)$ & $\Delta(k)$ & $\Delta(\tau)$ & $\Delta\!\left(b_1\right)+\Delta\!\left(b_2\right)$\\
    \hline
     (9) & -2 &  4 & 12 &  2 \\
    (10) & -2 &  4 &  7 &  3 \\
    (11) & -2 &  4 &  7 &  3 \\
    (12) & -2 &  4 &  2 &  4 \\
    (13) & -2 &  4 &  2 &  4 \\
    (14) & -2 &  4 &  2 &  4 \\
    (15) & -2 &  4 & -3 &  5 \\
    (16) & -2 &  4 & -8 &  6 \\
    (17) & -2 &  2 &  4 &  2 \\
    (18) & -2 &  2 & -1 &  3 \\
    (19) & -2 &  2 & -1 &  3 \\
    (20) & -2 &  2 & -6 &  4 \\
    (21) & -2 &  0 & -4 &  2 \\
    (22) & -2 &  0 &  1 &  1 \\
    (23) & -2 &  0 &  1 &  1 \\
    (24) & -2 &  0 & -4 &  2 \\
    (25) & -2 &  0 & -4 &  2 \\
    (26) & -2 & -2 & -2 &  0 \\
    (27) & -2 & -4 &  0 & -2 \\
  \end{tabular}
  \caption{$\Delta(\cdot)$-values for the nineteen (main) cases of Figure \ref{fig_adding_a_quadrangle}.}
  \label{table_delta_quadrangles}
  \end{center}
  \end{table}

  Next we can deal with the nineteen (main) cases of Figure \ref{fig_adding_a_quadrangle}. In every (main) case
  the situation of vertex $a$ and vertex $w$ has an equivalent in Figure \ref{fig_adding_a_triangle}. So up to
  symmetry we have to consider the situation at vertex $v$. Again there are four possibilities to consider showing
  up in (main) cases (9), (14), (21), and (27) of Figure \ref{fig_adding_a_quadrangle}. We denote the corresponding
  (vertex) cases by (v), (vi), (vii), and (viii), respectively. Similarly as before we denote the vertex $v$ before
  the addition of the quadrangle by $v_1$ and afterwards by $v_2$.

  \medskip

  In (vertex) case (v), occurring in (main) case (9), $v_1$ does not exist and $v_2$ is of normal type and we
  have $b_2\!\left(v_2\right)=1$ so that the sum of the $b_1$- and the $b_2$-count increases by one. In (vertex)
  case (vi) 
  both $v_1$ and $v_2$ are outer vertices.
  Vertex $v_2$ is of special type and $v_1$ has to be of normal type. For the $b_2$-value we have
  $b_2\!\left(v_2\right)=b_2\!\left(v_1\right)+1$ so that the sum of the $b_1$- and the $b_2$-count increases by
  two. In (vertex) case (vii) 
  both $v_1$ and $v_2$ are outer vertices. Vertex $v_1$ is of special type if and only if $v_2$
  is of special type. Since $b_2\!\left(v_2\right)=b_2\!\left(v_1\right)+1$ the sum
  of the $b_1$- and the $b_2$-count increases by one. In (vertex) case (viii) 
  If $v_2$ is an inner vertex than $v_1$, $v_2$ are of normal type, $b_2\!\left(v_1\right)=0$, and
  $b_2\!\left(v_2\right)=0$. If $v_2$
  is an outer vertex then $v_1$ is of special type, $v_2$ is of normal type, and
  $b_2\!\left(v_2\right)=b_2\!\left(v_1\right)+1$. Thus in both cases the sum of the $b_1$- and
  the $b_2$-count does not change.

  \medskip

  With the above we are able to give the $\Delta(\cdot)$-values for the (main) cases of Figure
  \ref{fig_adding_a_quadrangle} in Table \ref{table_delta_quadrangles}. It is easy to check that in all
  nineteen (main) cases Equation~(\ref{eqn_paramater_delta}) is valid.
\end{proof}

\begin{corollary}
  \label{cor_parameter}
  For a (finite) prospective $5$-regular $\mathcal{TQ}$-class of a matchstick graph we have
  \[
    \sigma-k+\frac{\tau-k}{3}+\frac{5}{3}b_1+\frac{5}{3}b_2=0.
  \]
\end{corollary}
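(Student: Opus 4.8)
The plan is to recognize that Corollary~\ref{cor_parameter} is merely the specialization of Lemma~\ref{lemma_parameter} to the case in which the $\mathcal{TR}$-class happens to be a prospective $5$-regular $\mathcal{TQ}$-class. Thus the only thing to do is to verify the (essentially definitional) inclusion: every finite prospective $5$-regular $\mathcal{TQ}$-class is a $\mathcal{TR}$-class, and then to quote Lemma~\ref{lemma_parameter}.

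First I would run through the three defining requirements of a $\mathcal{TR}$-class and check them off against the hypotheses on a prospective $5$-regular $\mathcal{TQ}$-class $\mathcal{B}$. By construction $\mathcal{B}$ is an edge-connected union of elements of $\mathcal{TQ}$, i.e.\ of triangles and quadrangles all of whose internal angles lie in $\left\{\frac{\pi}{3},\frac{2\pi}{3}\right\}$; by the basic facts on $\mathcal{TQ}$-classes recorded above, the vertices of $\mathcal{B}$ sit on a suitable regular triangular lattice; and ``prospective $5$-regular'' is by definition the statement that every inner vertex of the induced graph $\mathcal{G}$ has degree $5$ while every outer vertex has degree at most $5$ --- which, as was noted when the two notions of inner/outer vertex were reconciled, is precisely the dichotomy used in the $\mathcal{TR}$-setting. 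Hence $\mathcal{B}$ satisfies all defining conditions of a $\mathcal{TR}$-class; the one extra property a $\mathcal{TQ}$-class enjoys, namely connectedness of its dual graph, is simply not required by the definition of a $\mathcal{TR}$-class and is discarded. Finiteness of $\mathcal{B}$ is exactly the hypothesis under which Lemma~\ref{lemma_parameter} was proved (an induction on the number of faces), so it carries over directly.

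Second, since the parameters $\sigma$, $k$, $\tau$, $b_1$, $b_2$ were introduced for $\mathcal{TR}$-classes by the same recipe as for prospective $5$-regular $\mathcal{TQ}$-classes, I would then apply Lemma~\ref{lemma_parameter} to $\mathcal{B}$ regarded as a $\mathcal{TR}$-class and read off
\[
  \sigma-k+\frac{\tau-k}{3}+\frac{5}{3}b_1+\frac{5}{3}b_2=0,
\]
which is the assertion. I do not expect a genuine obstacle: the only point that deserves care is confirming that no condition in the definition of a $\mathcal{TR}$-class is strictly stronger than what prospective $5$-regularity already supplies --- in particular that the degree constraints match verbatim. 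All seemingly pathological local configurations (for instance, vertices of special type, or a $\mathcal{TQ}$-class whose faces look locally disconnected) are already allowed by, and handled within, Lemma~\ref{lemma_parameter}; compare the right-hand example of Figure~\ref{fig_tr_classes}.
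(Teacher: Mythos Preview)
Your proposal is correct and is exactly the (implicit) argument the paper intends: the corollary is stated without proof precisely because every finite prospective $5$-regular $\mathcal{TQ}$-class is, by the definitions given just before, a $\mathcal{TR}$-class, so Lemma~\ref{lemma_parameter} applies verbatim. Your explicit check that the defining conditions of a $\mathcal{TR}$-class are met (faces are triangles/quadrangles with angles in $\{\pi/3,2\pi/3\}$, vertices lie on a triangular lattice, and the degree constraints on inner/outer vertices coincide) is the only content needed, and it matches the paper's setup.
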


\noindent
Now we are ready to prove $c(\mathcal{B})\le 0$ for every (finite) prospective $5$-regular $\mathcal{TQ}$-class of a matchstick graph with maximum vertex degree at most $5$. To be more precisely we need to consider finite matchstick graphs $\mathcal{M}$ with maximum vertex degree at most $5$ and a prospective $5$-regular $\mathcal{TQ}$-class $\mathcal{B}$ of $\mathcal{M}$. In $\mathcal{M}$ all vertices of $\mathcal{B}$ must have vertex degree exactly $5$.
In order to be able to speak of a contribution $c(\mathcal{B})$ for every vertex $v$ in $\mathcal{B}$ there must be five faces $(v,i)$, unequal to the outer face, completely being contained in $\mathcal{M}$. If a finite $5$-regular matchstick graph would exist, it would certainly be such a graph. Otherwise we can only look at local parts of such a (possible infinite) graph.

As mentioned in the previous section we have to perform some bookkeeping in order to prove $c\!\left(\cup_i\mathcal{B}_i\right)\le 0$ for a set of prospective $5$-regular $\mathcal{TQ}$-classes. To every face arc $[v,i]$ where $v$ is contained in a given prospective $5$-regular $\mathcal{TQ}$-class $\mathcal{B}$ we will assign a real weight $\omega([v,i])\ge c([v,i])$ which fulfills
\[
  \sum_{v\in\mathcal{B}}\sum_{i=1}^5 \omega([v,i])\le 0.
\]

We start with the face arcs $[v,i]$ where the face $(v,i)$ is contained in $\mathcal{B}$. We call those arcs inner arcs and set $\omega([v,i])=c([v,i])$. With this the sum $\sum\limits_{[v,i]\text{ inner arc of }\mathcal{B}}\omega([v,i])$ over the inner arcs equals the parameter $\sigma$ from Corollary \ref{cor_parameter}.

\begin{figure}[htp]
\begin{center}
\includegraphics{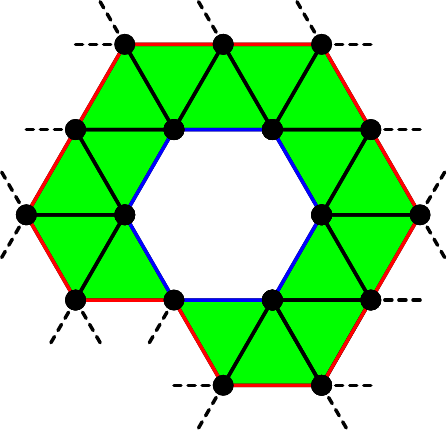}
\caption{A part of a $5$-regular matchstick graph.}
\label{fig_honeycomb_components_3}
\end{center}
\end{figure}

Now we consider the remaining face arcs $[v,i]$ which are given by a vertex $v$ and two edges $e$, $e'$ being adjacent to $v$. We say that the edges $e$ and $e'$ are associated to $[v,i]$.
Let $v$ be an outer vertex of $\mathcal{B}$ and $e$ be an edge (in $\mathcal{M}$) being adjacent to $v$.
If $e$ is not an outer edge of $\mathcal{B}$ we call $e$ a \textit{leaving edge}. This is only possible if $e$ is contained in $\mathcal{M}$ but not in $\mathcal{B}$.
To emphasize that we consider edge $e$ being rooted at vertex $v$ we also speak of \textit{leaving half edges}.
It may happen that a leaving edge $e=\{u,v\}$ corresponds to two leaving half edges being rooted at vertex $u$ and $v$, respectively.
We remark that the number of outer edges equals $k$ and that the number of leaving half edges equals $\tau$ in Corollary \ref{cor_parameter}.

In Figure \ref{fig_honeycomb_components_3} we have depicted the outer edges in blue and red. The leaving half edges are depicted by dashed lines.

If $[v,i]$ is a face arc where both associated edges $e$ and $e'$ correspond to leaving half edges then we set $\omega([v,i])=\frac{1}{3}$ being greater or equal to $c([v,i])$.

For the remaining face arcs we consider the outer edges of $\mathcal{B}$. They form a union of simple cycles, i.~e.{} cycles without repeated vertices. In the example of Figure \ref{fig_honeycomb_components_2} we have a cycle of length $6$ and a cycle of length $15$. We will treat each cycle $C$ separately. Such a cycle $C$ divides the plane into two parts. We call the part containing the faces of $\mathcal{B}$ the interior of $C$ and the other part the exterior of $C$. When we speak of leaving half edges then we only want to address those which go into the exterior of $C$. So the blue cycle of Figure \ref{fig_honeycomb_components_3} does not contain a leaving half edge whereas the red cycle contains $19$ leaving half edges.

It may happen that such a cycle $C$ does not contain any leaving half edges at all, like the blue cycle of Figure \ref{fig_honeycomb_components_3}. In this case $C$ consists of the single face $(v,i)$, where $[v,i]$ is an arbitrary face arc being associated to an edge of $C$. Since $(v,i)$ is not contained in $\mathcal{B}$ it is neither a triangle nor a quadrangle. If $(v,i)$ is a pentagon we set $\omega([v,i])=c([v,i])=-1$ for all associated face arcs $[v,i]$ otherwise we set $\omega([v,i])=-\frac{4}{3}\ge c([v,i])$.

If $C$ does contain leaving half edges the face which is adjacent to a given outer edge $e$ and not contained in $\mathcal{B}$ is bordered by two leaving half edges. In Figure \ref{fig_weights} we have depicted the possible cases.
In principle it would be possible that $C$ contains only one leaving half edge $e$. In such a situation we consider the two half edges the cases of Figure \ref{fig_weights} as being identified to $e$.

\begin{figure}[htp]
\begin{center}
\includegraphics{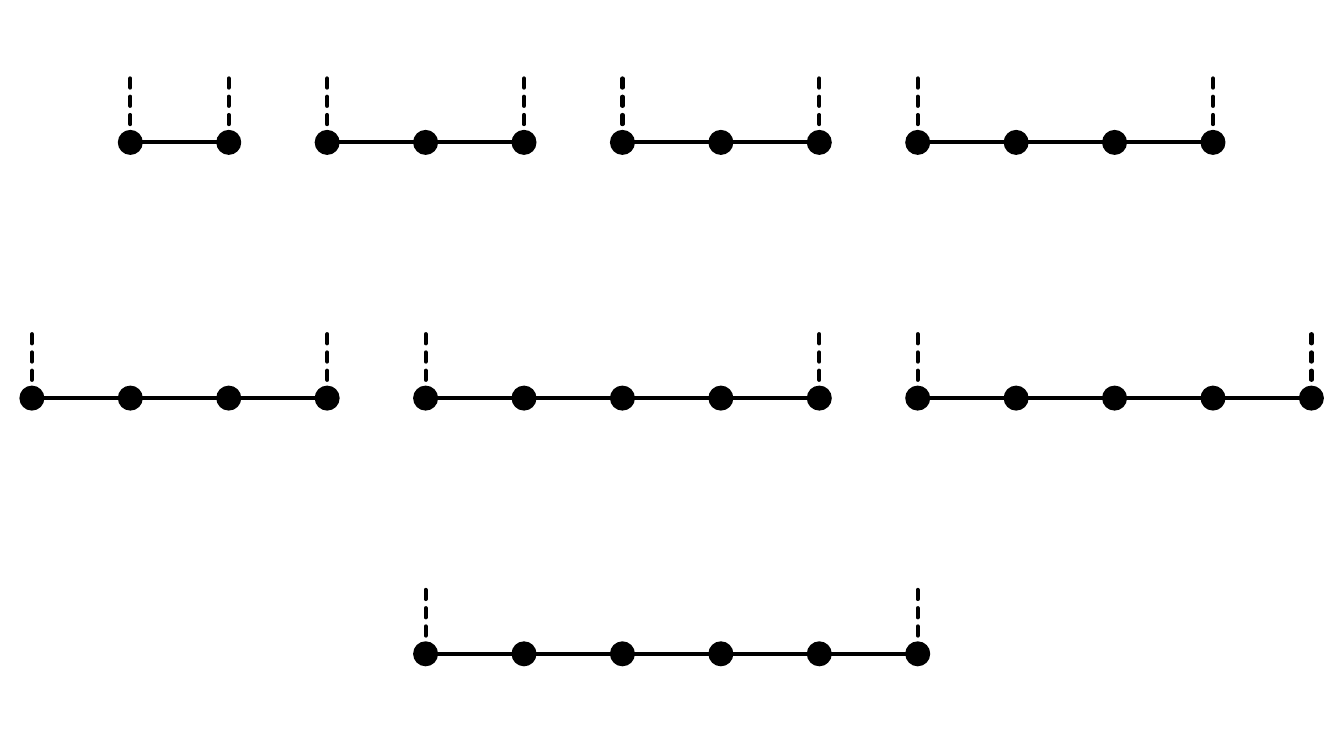}
\caption{Assigning weights.}
\label{fig_weights}
\end{center}
\end{figure}

In case (1) a single outer edge is bordered by two leaving half edges. Since the outer face is not part of $\mathcal{B}$ it cannot be a triangle. So it is at least a quadrangle and we can set $\omega([v,i])=-\frac{1}{2}\ge c([v,i])$. In cases (2a) and (2b) we consider a path $P$ of length two which is bordered by two leaving half edges. Since the outer face is not part of $\mathcal{B}$ it can not be a triangle or a quadrangle. If it is a pentagon we set $\omega([v,i])=-\frac{2}{3}\ge c([v,i])$ for the ends of $P$ and $\omega([v,i])=-1\ge c([v,i])$ for the central vertex. If the outer face has at least $6$ edges we set $\omega([v,i])=-\frac{1}{2}\ge c([v,i])$ for the ends of $P$ and $\omega([v,i])=-\frac{4}{3}\ge c([v,i])$ for the central vertex. In the remaining cases we consider paths of length at least $3$ in case (3a), (3b), paths of length $4$ in case (4a), (4b), and paths of length at least $5$ in case (5). Again the outer face cannot be a triangle or a quadrangle. In case (5) it even cannot be a pentagon. Here we do the assignments of the weights as depicted in Figure \ref{fig_weights}. We can easily check that we have $\omega([v,i])\ge c([v,i])$.

\begin{lemma}
  Let $\mathcal{B}$ be a given prospective $5$-regular $\mathcal{TQ}$-class of matchstick graph $\mathcal{M}$ with
  vertex degree at most $5$. If $c(\mathcal{B})$ can be evaluated within $\mathcal{M}$ then the weight function
  $\omega$ constructed above fulfills
  \begin{equation}\label{eqn_non_negativ}\sum_{v\in\mathcal{B}}\sum_{i=1}^5 \omega([v,i])\le 0.\end{equation}
  and $\omega([v,i])\ge c([v,i])$ for all vertices $v\in\mathcal{B}$, $1\le i\le 5$.
\end{lemma}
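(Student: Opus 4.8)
\emph{Plan.} The lemma makes two assertions; the pointwise bound $\omega([v,i])\ge c([v,i])$ costs almost nothing, since by construction $\omega([v,i])$ is either $c([v,i])$ itself (inner arcs) or the value $c(a)$ for the smallest number of corners $a$ that the exterior face $(v,i)$ is geometrically forced to have, and that forcing is elementary: on a triangular grid the exterior face bordering a maximal path of outer edges cannot be an equilateral unit triangle nor a unit quadrangle (such a face would have a $\tfrac\pi3$- or $\tfrac{2\pi}3$-angle between two grid edges, hence be the $60^{\circ}/120^{\circ}$ rhombus, hence lie in $\mathcal{TQ}$ and therefore in $\mathcal{B}$), and in the two exceptional cases it cannot even be a pentagon (a simple unit grid polygon all of whose angles lie in $\{\tfrac\pi3,\tfrac{2\pi}3\}$ is a triangle, a rhombus, or a regular hexagon). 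For the inequality (\ref{eqn_non_negativ}) I would split $\sum_{v\in\mathcal B}\sum_{i=1}^{5}\omega([v,i])$ into the contributions of the three kinds of arcs at vertices of $\mathcal B$ — inner arcs; arcs both of whose associated edges are leaving half edges; and arcs associated to an outer edge (every non-inner arc is of one of the last two kinds, because a bordering edge of a non-inner arc is never an inner edge of $\mathcal B$). The inner arcs sum to $\sigma$, so, writing $S$ for the rest and applying Corollary \ref{cor_parameter} in the form $\sigma=k-\tfrac{\tau-k}{3}-\tfrac53 b_1-\tfrac53 b_2$, it suffices to prove
\[
  S\ \le\ -k+\frac{\tau-k}{3}+\frac53 b_1+\frac53 b_2\ =\ -\sigma .
\]

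I would bound $S$ one boundary cycle $C$ at a time. If $C$ carries no leaving half edge into its exterior it bounds a single exterior face, which by the angle restriction above is a regular hexagon (all six of whose arcs sit at vertices of $\mathcal B$), unless it runs through a vertex of special type; this degenerate situation is one instance of the scheme that follows. Otherwise the outer edges of $C$ split into maximal sub-paths $P$, where all edges of $P$ border one exterior face $f_P$ and the edge of $\partial f_P$ just beyond each end of $P$ is a leaving half edge; the interior vertices of $P$ have no free valency, while the break vertex $v_P$ ending $P$ carries $\tau(v_P)-1$ leaving-leaving arcs. Attributing to $P$ the weights of the arcs of $f_P$ at the vertices of $P$ together with those $\tau(v_P)-1$ leaving-leaving arcs at $v_P$, I would check the local inequality
\[
  \Big(\text{weights of the arcs of }f_P\text{ at vertices of }P\Big)+\frac{\tau(v_P)-1}{3}\ \le\ -|P|+\frac{\tau(v_P)-|P|}{3}+\frac53\,\mathrm{corr}(P),
\]
in which $|P|$ is the number of outer edges of $P$ and $\mathrm{corr}(P)\ge 0$ counts, with a fixed convention, the vertices of $P$ of special type and the vertices of $P$ carrying a $\tfrac{2\pi}3$-angle of a quadrangle of $\mathcal B$. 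Summing over all sub-paths and all cycles then telescopes — the $|P|$ add to $k$, the $\tau(v_P)$ add to $\tau$, the leaving-leaving arcs are exhausted, and (each special vertex and each outer $\tfrac{2\pi}3$-quadrangle-angle charged to one sub-path only) the corrections add to at most $b_1+b_2$ — giving the required bound on $S$ and hence $\sum\omega\le\sigma+(-\sigma)=0$.

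The local inequality is a finite check against the eight shapes of Figure \ref{fig_weights} (cases (1)--(5) with their ``straight''/``bent'' and ``pentagon''/``at least six corners'' sub-cases) plus the hexagonal degenerate case. In the calibrated cases it holds with equality and $\mathrm{corr}(P)=0$ — for instance in case (1) the two arcs of $f_P$ at $P$ receive weight $-\tfrac12$ and the inequality reads $-1+\tfrac{\tau(v_P)-1}{3}\le -1+\tfrac{\tau(v_P)-1}{3}$ — and it is precisely when a vertex of $P$ is of special type or carries a $\tfrac{2\pi}3$-quadrangle-angle (so that the local geometry forces $f_P$ to be smaller, or alters the arc/valency count) that a positive correction is needed, and then it is supplied by $b_1$ respectively $b_2$. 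I expect the real work to be the bookkeeping rather than any individual case: one must attribute each leaving-leaving arc and each arc of an exterior face to exactly one sub-path (watching a cycle with only one leaving half edge and an exterior face that borders several sub-paths), and each special vertex — which lies on two boundary cycles and at which the faces of $\mathcal B$ look locally disconnected — and each outer $\tfrac{2\pi}3$-quadrangle-angle to exactly one sub-path, so that nothing is counted twice. Once that convention is fixed, every one of the finitely many cases reduces to a one-line arithmetic identity or inequality.
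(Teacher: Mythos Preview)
Your approach is essentially the same as the paper's: both start from Corollary~\ref{cor_parameter}, separate the inner-arc contribution $\sigma$, and then bound the remaining (outer and leaving-leaving) arc weights cycle by cycle against the budget $-k+\tfrac{\tau-k}{3}+\tfrac{5}{3}b_1+\tfrac{5}{3}b_2$, using the finite case list of Figure~\ref{fig_weights} and a separate treatment of boundary cycles with no leaving half edges. The difference is purely organizational. The paper maintains a running invariant $\Omega-\mathcal K+\tfrac{T-\mathcal K}{3}+\tfrac{5}{3}B_1\le 0$ and books arcs incrementally; you instead write down a local inequality per maximal sub-path and sum. Your careful listing of the attribution hazards (an exterior face bordering several sub-paths, a cycle with a single leaving half edge, a special vertex sitting on two boundary pieces) matches exactly the points the incremental scheme has to get right.

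Two small remarks. First, your correction term $\mathrm{corr}(P)$ invokes both $b_1$ and $b_2$, but the paper in fact spends all of $b_2$ up front (via $b_2\ge 0$) and in the case-by-case check uses only the special-vertex count $b_1$ --- and that only for the degenerate pentagonal hole with no leaving half edges. So the local inequality actually holds with $\mathrm{corr}(P)$ counting special vertices alone; the $\tfrac{2\pi}{3}$-quadrangle angles never need to be charged. Second, for a boundary cycle with no leaving half edges your dichotomy ``regular hexagon, unless a special vertex'' is correct (your angle-sum argument gives $l\le 6$ when every exterior angle is $\le\tfrac{2\pi}{3}$), but note the paper also allows $l\ge 7$ holes, which necessarily contain a special vertex; your $\mathrm{corr}$ term absorbs these just as it absorbs the pentagon case.
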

\begin{proof}
  By construction we have $\omega([v,i])\ge c([v,i])$ for all $v\in\mathcal{B}$, $1\le i\le 5$. To prove Equation
  (\ref{eqn_non_negativ}) we utilize a booking technique. We want to book face arcs $[v,i]$, outer edges, leaving
  half edges and vertices of special type.

  By $\Omega$ we denote the sum of weights $\omega([v,i])$ over the arcs $[v,i]$
  which are booked. If we book an arc $[v,i]$ we also want to book half of its both associated edges and half edges
  each. Therefore we define $\eta(e)=1$ if edge or half edge $e$ was never booked, $\eta(e)=\frac{1}{2}$ if $e$ was
  booked only one time, and $\eta(e)=0$ if $e$ was booked two times. By $\mathcal{K}$ we denote the sum over $\eta(e)$,
  where the edges $e$ are outer edges of $\mathcal{B}$ and by $T$ we denote the sum over all leaving half edges. By
  $B_1$ we count the number of vertices of special type which are not booked so far.

  As done in the construction of $\omega$ we start with those face arcs $[v,i]$, where the corresponding face $(v,i)$
  is contained in $\mathcal{B}$. Here we set $\omega([v,i])=c([v,i])$. After this initialization we have $\Omega=\sigma$,
  $\mathcal{K}=k$, $T=\tau$, and $B_1=b_1$ using the notation of Lemma \ref{lemma_parameter}. With
  this we have
  \begin{equation}
    \label{eqn_bookkeeping}
    \Omega-\mathcal{K}+\frac{T-\mathcal{K}}{3}+\frac{5}{3}B_1\le 0
  \end{equation}
  since $b_2\ge 0$. Now we book the remaining face arcs and keep track that Inequality (\ref{eqn_bookkeeping}) endures.

  If $[v,i]$ is a face arc between two leaving half edges $e$ and $e'$, then we book these edges and assign
  $\omega([v,i])=\frac{1}{3}$. By this booking step $\Omega$ increases by $\frac{1}{3}$ and $T$ decreases by $1$.
  Thus Inequality (\ref{eqn_bookkeeping}) remains valid.

  Next we consider the simple cycles $C$ of the outer edges. We start with the cases where $C$ does not contain any
  leaving half edges. In this case $C$ consists of the single face $(v,i)$, where $[v,i]$ is an arbitrary face arc
  being associated to an edge of $C$. If $(v,i)$ is a pentagon both $\Omega$ and $\mathcal{K}$ decrease by $5$.
  We can easily check that on a regular triangular grid there does not exist an equilateral pentagon where all
  inner angles are at most $\frac{2\pi}{3}$. Thus face $(v,i)$ does contain an inner angle of at least $\pi$ at a vertex
  $u$. This vertex $u$ must be of special type due to its angle sum of $2\pi$. Thus we can book the special type
  and decrease $B_1$ by one so that Inequality (\ref{eqn_bookkeeping}) remains valid. If $(v,i)$ contains $l\ge 6$
  edges we set $\omega([v,i])=-\frac{4}{3}$ at the corresponding face arcs. Thus $\mathcal{K}$ decreases by $l$ and
  $\Omega$ decreases by $\frac{4}{3}l$ so that Inequality (\ref{eqn_bookkeeping}) remains valid.

  Next we consider the cases of Figure \ref{fig_weights}. In all cases $T$ decreases by one. The decreases of
  $\mathcal{K}$ and $\Omega$ vary in the different cases, but we can easily check that Inequality
  (\ref{eqn_bookkeeping}) remains valid.

  A the end of this procedure we have $\mathcal{K}=T=0$ since all edges are booked properly. Since all vertices
  of special type are booked at most once we have $B_1\ge 0$. Thus we can conclude 
  \[
    \sum_{v\in\mathcal{B}}\sum_{i=1}^5 \omega([v,i])=\Omega\le 0
  \]
  from Inequality (\ref{eqn_bookkeeping}) in the end.
\end{proof}

\begin{corollary}
  For a prospective $5$-regular $\mathcal{TQ}$-class $\mathcal{B}$ of a matchstick graph with maximum
  vertex degree $5$ we have
  \[
    c(\mathcal{B})\le 0.
  \]
\end{corollary}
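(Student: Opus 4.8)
The plan is to combine the parameter equation from Corollary~\ref{cor_parameter} with the weight function $\omega$ and the inequality just established. Recall that $\omega([v,i])\ge c([v,i])$ for every face arc $[v,i]$ with $v\in\mathcal{B}$, $1\le i\le 5$. Summing this over all such arcs gives
\[
  c(\mathcal{B})=\sum_{v\in\mathcal{B}}\sum_{i=1}^5 c([v,i])\le\sum_{v\in\mathcal{B}}\sum_{i=1}^5\omega([v,i]).
\]
By the previous lemma the right-hand side is at most $0$, and hence $c(\mathcal{B})\le 0$. So in essence the corollary is an immediate consequence of the two preceding results.

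First I would make explicit that $c(\mathcal{B})$, by the definition of the contribution of a vertex set together with the definition of face arcs, is exactly the sum $\sum_{v\in\mathcal{B}}\sum_{i=1}^5 c\!\left(a_{v,i}\right)=\sum_{v\in\mathcal{B}}\sum_{i=1}^5 c([v,i])$, since $c([v,i])$ was defined to equal $c\!\left(a_{v,i}\right)$. This is where the hypothesis ``with maximum vertex degree $5$'', or more precisely the assumption that every vertex of $\mathcal{B}$ actually has degree $5$ and that its five surrounding faces lie in $\mathcal{M}$, is needed: only then is $c(\mathcal{B})$ well-defined and does each vertex contribute exactly five face-arc terms. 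Second I would invoke $\omega([v,i])\ge c([v,i])$ arc by arc to pass to the $\omega$-sum, and third I would apply Equation~(\ref{eqn_non_negativ}) to bound the $\omega$-sum by zero.

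There is no real obstacle here; the work has already been done in the construction of $\omega$ (which used Corollary~\ref{cor_parameter} implicitly through the booking identity) and in verifying both $\omega\ge c$ pointwise and $\sum\omega\le 0$. The only point deserving a line of care is the well-definedness of $c(\mathcal{B})$ in the stated setting, i.e.\ that each vertex of the $\mathcal{TQ}$-class is surrounded by five genuine (non-outer) faces of $\mathcal{M}$; this is exactly the situation described in the paragraph preceding the lemma, so it may simply be cited.

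\begin{proof}
  Every vertex $v$ of $\mathcal{B}$ has degree $5$ in $\mathcal{M}$ and is surrounded by five faces $(v,1),\dots,(v,5)$, none equal to the outer face, so the face set $f(v)=\{a_{v,1},\dots,a_{v,5}\}$ is defined and, by the definition of the contribution of a vertex and of a face arc,
  \[
    c(\mathcal{B})=\sum_{v\in\mathcal{B}}c(v)=\sum_{v\in\mathcal{B}}\sum_{i=1}^5 c\!\left(a_{v,i}\right)
    =\sum_{v\in\mathcal{B}}\sum_{i=1}^5 c([v,i]).
  \]
  By the preceding lemma the weight function $\omega$ satisfies $\omega([v,i])\ge c([v,i])$ for all $v\in\mathcal{B}$ and all $1\le i\le 5$, as well as Inequality~(\ref{eqn_non_negativ}). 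Summing the pointwise inequality over all face arcs and then applying~(\ref{eqn_non_negativ}) gives
  \[
    c(\mathcal{B})=\sum_{v\in\mathcal{B}}\sum_{i=1}^5 c([v,i])\le\sum_{v\in\mathcal{B}}\sum_{i=1}^5 \omega([v,i])\le 0,
  \]
  which is the assertion.
\end{proof}
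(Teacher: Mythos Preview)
Your proof is correct and is precisely the intended immediate derivation from the preceding lemma; the paper states the result as a corollary without further proof because it follows exactly as you wrote, by combining $\omega\ge c$ arc-wise with Inequality~(\ref{eqn_non_negativ}). Your remark on the well-definedness of $c(\mathcal{B})$ is also appropriate, matching the hypothesis discussed in the paragraph before the lemma.
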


As depicted in Figure \ref{fig_honeycomb_components} it may happen that a vertex $v$ is contained in more than one prospective $5$-regular $\mathcal{TQ}$-class $\mathcal{B}$. In this cases it belongs to exactly two prospective $5$-regular $\mathcal{TQ}$-classes due to an vertex degree of five. In Figure \ref{fig_two_tq_classes} we have depicted the possible cases. Here one prospective $5$-regular $\mathcal{TQ}$-class is depicted in green and the other one is depicted in blue. In case (1) the angles of face arcs $[v,3]$ and $[v,5]$ need not to be multiples of $\frac{\pi}{3}$. Similarly also in case (2) the angles of the face arcs $[v,2]$, $[v,3]$, and $[v,5]$ need not to be multiples of $\frac{\pi}{3}$. In the next lemma we show that the assignment of our weight functions do not underestimate the contribution of vertex $v$.

\begin{figure}[htp]
\begin{center}
\includegraphics{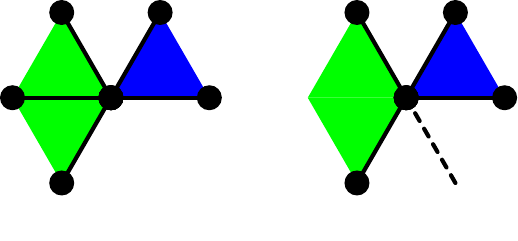}
\caption{Two prospective $5$-regular $\mathcal{TQ}$-classes with a common vertex.}
\label{fig_two_tq_classes}
\end{center}
\end{figure}

\begin{lemma}
  Let $\mathcal{B}_1$ and $\mathcal{B}_2$ be two different prospective $5$-regular $\mathcal{TQ}$-classes of a
  matchstick graph with maximum vertex degree at most $5$ and weight functions $\omega_1$ and $\omega_2$,
  respectively. If a vertex $v$ is contained in both $\mathcal{B}_1$ and $\mathcal{B}_2$ then we have
  $$
    \sum_{i=1}^5 \omega_1([v,i])+\omega_2([v,i])\ge \sum_{i=1}^5 c([v,i]).
  $$
\end{lemma}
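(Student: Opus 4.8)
The plan is to establish the inequality one arc at a time at the vertex $v$: for each $i$ I would compare $\omega_1([v,i])+\omega_2([v,i])$ with $c([v,i])$ and show that the surplus collected on the arcs where one of the two classes has already ``sealed off'' the neighbourhood of $v$ dominates the uniformly bounded deficit left on the others.

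First I would fix the local picture at $v$. Since each triangle and each quadrangle lies in exactly one $\mathcal{TQ}$-class, the faces among $(v,1),\dots,(v,5)$ belonging to $\mathcal{B}_1$ form a single block in the cyclic order around $v$, and likewise for $\mathcal{B}_2$; these two blocks cannot be cyclically adjacent, for a common edge would put a face of $\mathcal{B}_1$ and a face of $\mathcal{B}_2$ in relation $\sim$ and force $\mathcal{B}_1=\mathcal{B}_2$. Hence the five faces split cyclically as (block, gap, block, gap) with each part non-empty, a gap being a maximal run of faces outside $\mathcal{B}_1$ and $\mathcal{B}_2$. A short count on these five faces shows that $v$ is of normal type for both classes, that no third $\mathcal{TQ}$-class can contain $v$, and that — up to symmetry and exchanging $\mathcal{B}_1\leftrightarrow\mathcal{B}_2$ — exactly the two configurations of Figure~\ref{fig_two_tq_classes} occur: in the first, one block has size $2$ and both gaps size $1$; in the second, both blocks have size $1$ and one gap has size $2$.

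In either configuration I would classify each of the five edges at $v$ as inner edge, outer edge, or leaving half edge with respect to each of $\mathcal{B}_1,\mathcal{B}_2$ — this is read off at once from the block structure — and then obtain $\omega_1([v,i])$ and $\omega_2([v,i])$ from the construction of the weight function. Two observations carry the argument. If $(v,i)$ lies in one of the two classes, then in the other class both edges bounding $[v,i]$ border only gap faces and faces of the first class, hence are leaving half edges of the other class, which therefore assigns $\omega([v,i])=\tfrac13$; together with $\omega([v,i])\ge c([v,i])$ for the first class this gives $\omega_1([v,i])+\omega_2([v,i])-c([v,i])\ge\tfrac13$. If $(v,i)$ is a gap face and one of the classes does not assign $\tfrac13$ to $[v,i]$, then, using normal type, exactly one bounding edge at $v$ is an outer edge of that class and the other is a leaving half edge of it, so $v$ is an \emph{end} vertex of the corresponding boundary path in Figure~\ref{fig_weights}; if moreover $(v,i)$ is a size-$1$ gap — an arc I call \emph{doubly end} — this happens for both classes, and otherwise at least one class still assigns $\tfrac13$ and the arc again yields a surplus of at least $\tfrac13$.

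It remains to bound the doubly-end arcs. Here $\omega_1([v,i])$ and $\omega_2([v,i])$ are both end-vertex weights, hence $\ge-\tfrac23$, with the value $-\tfrac23$ possible only when $(v,i)$ is a pentagon, as is visible in Figure~\ref{fig_weights}. A gap face is not an equilateral triangle — that would lie in $\mathcal{TQ}$, hence in $\mathcal{B}_1$ or $\mathcal{B}_2$ since $v$ lies in no other class — so it has $a\ge4$ corners, and distinguishing $a=4$ (both weights $-\tfrac12$, sum $-1=c(4)-\tfrac12$), $a=5$ (sum $\ge-\tfrac43\ge c(5)-\tfrac12$) and $a\ge6$ (sum $\ge-1\ge c(a)-\tfrac12$) gives $\omega_1([v,i])+\omega_2([v,i])\ge c([v,i])-\tfrac12$. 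Summing over the five arcs: in the first configuration there are exactly two doubly-end arcs and three arcs of surplus $\ge\tfrac13$, so $\sum_{i=1}^5\big(\omega_1([v,i])+\omega_2([v,i])-c([v,i])\big)\ge 3\cdot\tfrac13-2\cdot\tfrac12=0$; in the second there is exactly one doubly-end arc and four arcs of surplus $\ge\tfrac13$, so the sum is $\ge\tfrac43-\tfrac12>0$. The main obstacle is exactly the bookkeeping just sketched: checking, in each of the two configurations, that $v$ is genuinely an end vertex for both cycle treatments of a doubly-end arc — never a central vertex, nor a vertex of a boundary cycle carrying no leaving half edges — and that no end-vertex weight in Figure~\ref{fig_weights} is smaller than $-\tfrac12$ outside the pentagon case; granting this, the remainder is routine arithmetic with $c(3)=\tfrac13$, $c(4)=-\tfrac12$, $c(5)=-1$, and $c(a)\le-\tfrac43$ for $a\ge6$.
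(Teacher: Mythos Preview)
Your overall architecture coincides with the paper's: you correctly reduce to the two local configurations of Figure~\ref{fig_two_tq_classes}, correctly argue that $v$ is of normal type in both classes, and correctly separate the five arcs into ``surplus'' arcs (each contributing at least $\tfrac13$) and ``doubly-end'' gap arcs. The final counts ($3\cdot\tfrac13-2\cdot\tfrac12$ and $4\cdot\tfrac13-1\cdot\tfrac12$) are exactly the ones the paper arrives at.

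There is, however, a genuine gap in your bound on the doubly-end arcs. You assert that end-vertex weights in Figure~\ref{fig_weights} are always $\ge-\tfrac23$, with $-\tfrac23$ attained only for pentagons. This is false: in case~(3a) of Figure~\ref{fig_weights} (a boundary path of three outer edges whose exterior face is a pentagon) the end-vertex weight is $-\tfrac56$, as the paper itself uses. Consequently your pentagon estimate ``sum $\ge-\tfrac43$'' is unjustified; both classes could sit in case~(3a), giving $-\tfrac56-\tfrac56=-\tfrac53<-\tfrac32=c(5)-\tfrac12$, and your inequality would collapse.

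The repair is the one extra geometric observation the paper makes and you omit: the outer-edge path of $\mathcal{B}_1$ and that of $\mathcal{B}_2$ along the boundary of the gap pentagon are edge-disjoint (a shared edge would force $\mathcal{B}_1=\mathcal{B}_2$), so their lengths sum to at most $5$. Hence they cannot both have length $\ge 3$, i.e.\ the arc cannot be in case~(3a) for both classes. With one of the two weights then $\ge-\tfrac23$ and the other $\ge-\tfrac56$, you recover $\omega_1+\omega_2\ge-\tfrac32=c(5)-\tfrac12$, which is precisely the deficit bound you need. After inserting this argument your proof goes through and is essentially the paper's.
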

\begin{proof} 
  W.l.o.g.\ the prospective $5$-regular $\mathcal{TQ}$-class $\mathcal{B}_1$ is depicted green in Figure
  \ref{fig_two_tq_classes} and $\mathcal{B}_2$ is depicted blue.

  At first we consider case (i). Here we have $\omega_1([v,1])\ge c[(v,1])$, $\omega_1([v,2])\ge c[(v,2])$, and
  $\omega_2([v,4])\ge c([v,4])$. Next we look at the face arcs which are bordered by two leaving half edges. Here we
  have $\omega_2([v,1])=\frac{1}{3}$, $\omega_2([v,2])=\frac{1}{3}$, and $\omega_1([v,4])=\frac{1}{3}$. Due to symmetry
  it suffices to show
  \begin{equation}
    \label{eq_overcount}
    \omega_1([v,3])+\omega_2([v,3])\ge c([v,3])-\frac{1}{2}.
  \end{equation}

  Since at vertex $v$ there are leaving half edges both in $\mathcal{B}_1$ and $\mathcal{B}_2$ we have to be in one
  of the cases of Figure \ref{fig_weights} for the determination of the weight of face arc $[v,3]$. If for
  $\mathcal{B}_1$ face arc $[v,3]$ is in one of the cases (1), (2b), (3b), (4b), or (5) then we have
  $\omega_1([v,3])=-\frac{1}{2}$ and Inequality~(\ref{eq_overcount}) is valid. Due to symmetry the same holds if
  for $\mathcal{B}_2$ face arc $[v,3]$ is in one of the cases (1), (2b), (3b), (4b), or (5). In all other cases
  $(v,i)$ is a pentagon so that $c([v,i])= -1$.
  We remark that $[v,3]$ can not be in case (3a) both in $\mathcal{B}_1$ and $\mathcal{B}_2$. Thus we have
  $\omega_1([v,3])+\omega_2([v,3])\ge -\frac{5}{6}-\frac{2}{3}=-1-\frac{1}{2}\ge c([v,3])-\frac{1}{2}$.

  To finish the proof we consider case (ii). Here we have $\omega_1([v,1])\ge c[(v,1])$, $\omega_1([v,2])\ge c[(v,2])$,
  $\omega_2([v,4])\ge c([v,4])$, $\omega_2([v,3])\ge c([v,3])$, $\omega_1([v,3])=\frac{1}{3}$,
  $\omega_1([v,4])=\frac{1}{3}$,
  $\omega_2([v,1])=\frac{1}{3}$, and $\omega_2([v,2])=\frac{1}{3}$. Thus it suffices to show
  $$
    \omega_1([v,5])+\omega_2([v,5])\ge c([v,5])-\frac{4}{3}.
  $$
  Since $\omega_1([v,5]),\omega_2([v,5])\ge-\frac{5}{6}$ and $c([v,5])\le -\frac{1}{2}$ this inequality is true.
\end{proof}

\begin{corollary}
  \label{cor_non_negativ_all}
  If $\mathcal{B}_1$, $\mathcal{B}_2$, $\dots$ are all prospective $5$-regular$\mathcal{TQ}$-classes of a
  (finite) matchstick graph with maximum vertex degree at most $5$ we have
  $$
    c\!\left(\underset{i}{\cup}\mathcal{B}_i\right)\le 0.
  $$
\end{corollary}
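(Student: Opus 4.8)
The plan is to prove the Corollary by concatenating the two preceding lemmas via a redistribution of the arc weights to the vertices, so that the pointwise estimate $\omega([v,i])\ge c([v,i])$ and the per-class inequality $\sum_{v\in\mathcal{B}}\sum_{i=1}^5\omega([v,i])\le 0$ can be combined without double-counting trouble.

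First I would invoke the structural observation made in the paragraph preceding the last lemma: since every vertex of $\mathcal{M}$ has degree at most five, a vertex $v$ of $\mathcal{C}=\cup_i\mathcal{B}_i$ lies in at most two of the prospective $5$-regular $\mathcal{TQ}$-classes $\mathcal{B}_i$ (the two possible local pictures being exactly cases (1) and (2) of Figure~\ref{fig_two_tq_classes}). For each $v\in\mathcal{C}$ I then define a redistributed weight $\bar\omega(v):=\sum_{j\,:\,v\in\mathcal{B}_j}\sum_{i=1}^5\omega_j([v,i])$, where the outer sum runs over the one or two indices $j$ with $v\in\mathcal{B}_j$ and $\omega_j$ is the weight function attached to $\mathcal{B}_j$ by the earlier lemma. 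If $v$ lies in a single class $\mathcal{B}_j$, the pointwise inequality gives $\bar\omega(v)=\sum_{i=1}^5\omega_j([v,i])\ge\sum_{i=1}^5 c([v,i])=c(v)$. If $v$ lies in two classes $\mathcal{B}_j,\mathcal{B}_k$, then $\bar\omega(v)=\sum_{i=1}^5\bigl(\omega_j([v,i])+\omega_k([v,i])\bigr)\ge\sum_{i=1}^5 c([v,i])=c(v)$, which is precisely the statement of the last lemma. Hence $c(\mathcal{C})=\sum_{v\in\mathcal{C}}c(v)\le\sum_{v\in\mathcal{C}}\bar\omega(v)$.

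It then remains to re-expand the right-hand side class by class. Each vertex $v\in\mathcal{C}$ contributes the block $\sum_{i=1}^5\omega_j([v,i])$ to $\bar\omega(v)$ once for every class $\mathcal{B}_j$ containing it, so interchanging the order of summation yields $\sum_{v\in\mathcal{C}}\bar\omega(v)=\sum_j\sum_{v\in\mathcal{B}_j}\sum_{i=1}^5\omega_j([v,i])$. By the earlier lemma each inner summand $\sum_{v\in\mathcal{B}_j}\sum_{i=1}^5\omega_j([v,i])$ is at most $0$, whence $\sum_{v\in\mathcal{C}}\bar\omega(v)\le 0$ and therefore $c(\mathcal{C})=c\!\left(\cup_i\mathcal{B}_i\right)\le 0$.

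The only point needing care is the bookkeeping in this last interchange of sums: one must be certain that the double sum over classes reassembles $\sum_{v\in\mathcal{C}}\bar\omega(v)$ with the correct multiplicities, which is exactly where the ``at most two classes through each vertex'' fact is used and where one checks that the shared-vertex lemma is applied to the same pair of weight functions $\omega_1,\omega_2$ that occur in the redistribution. Everything else is a routine chaining together of the two lemmas, so I do not expect any further obstacle.
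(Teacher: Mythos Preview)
Your proposal is correct and is exactly the argument the paper intends: the corollary is stated without proof precisely because it is the immediate concatenation of the two preceding lemmas via the at-most-two-classes-per-vertex observation, and your write-up makes this derivation explicit and handles the bookkeeping (single versus shared vertices, interchange of summation) cleanly.
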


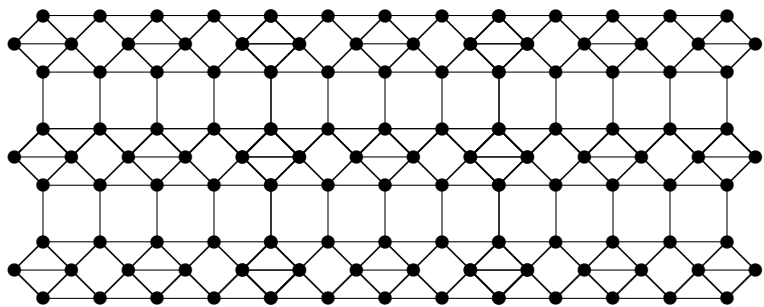
\begin{figure}[ht]
  \begin{center}
    \setlength{\unitlength}{0.75cm}
    \begin{picture}(13,5)
      \put(0.5,0){\line(1,0){4}}
      \put(0,0.5){\line(1,0){1}}
      \put(2,0.5){\line(1,0){1}}
      \put(4,0.5){\line(1,0){1}}
      \put(0.5,1){\line(1,0){4}}
      \put(0.5,2){\line(1,0){4}}
      \put(0,2.5){\line(1,0){1}}
      \put(2,2.5){\line(1,0){1}}
      \put(4,2.5){\line(1,0){1}}
      \put(0.5,3){\line(1,0){4}}
      \put(0.5,0){\line(-1,1){0.5}}
      \put(0.5,0){\line(1,1){0.5}}
      \put(1.5,0){\line(-1,1){0.5}}
      \put(1.5,0){\line(1,1){0.5}}
      \put(2.5,0){\line(-1,1){0.5}}
      \put(2.5,0){\line(1,1){0.5}}
      \put(3.5,0){\line(-1,1){0.5}}
      \put(3.5,0){\line(1,1){0.5}}
      \put(4.5,0){\line(-1,1){0.5}}
      \put(4.5,0){\line(1,1){0.5}}
      \put(0.5,1){\line(-1,-1){0.5}}
      \put(0.5,1){\line(1,-1){0.5}}
      \put(1.5,1){\line(-1,-1){0.5}}
      \put(1.5,1){\line(1,-1){0.5}}
      \put(2.5,1){\line(-1,-1){0.5}}
      \put(2.5,1){\line(1,-1){0.5}}
      \put(3.5,1){\line(-1,-1){0.5}}
      \put(3.5,1){\line(1,-1){0.5}}
      \put(4.5,1){\line(-1,-1){0.5}}
      \put(4.5,1){\line(1,-1){0.5}}
      \put(0.5,2){\line(-1,1){0.5}}
      \put(0.5,2){\line(1,1){0.5}}
      \put(1.5,2){\line(-1,1){0.5}}
      \put(1.5,2){\line(1,1){0.5}}
      \put(2.5,2){\line(-1,1){0.5}}
      \put(2.5,2){\line(1,1){0.5}}
      \put(3.5,2){\line(-1,1){0.5}}
      \put(3.5,2){\line(1,1){0.5}}
      \put(4.5,2){\line(-1,1){0.5}}
      \put(4.5,2){\line(1,1){0.5}}
      \put(0.5,3){\line(-1,-1){0.5}}
      \put(0.5,3){\line(1,-1){0.5}}
      \put(1.5,3){\line(-1,-1){0.5}}
      \put(1.5,3){\line(1,-1){0.5}}
      \put(2.5,3){\line(-1,-1){0.5}}
      \put(2.5,3){\line(1,-1){0.5}}
      \put(3.5,3){\line(-1,-1){0.5}}
      \put(3.5,3){\line(1,-1){0.5}}
      \put(4.5,3){\line(-1,-1){0.5}}
      \put(4.5,3){\line(1,-1){0.5}}
      \put(0.5,1){\line(0,1){1}}
      \put(1.5,1){\line(0,1){1}}
      \put(2.5,1){\line(0,1){1}}
      \put(3.5,1){\line(0,1){1}}
      \put(4.5,1){\line(0,1){1}}
      \put(0.5,0){\circle*{0.25}}
      \put(1.5,0){\circle*{0.25}}
      \put(2.5,0){\circle*{0.25}}
      \put(3.5,0){\circle*{0.25}}
      \put(4.5,0){\circle*{0.25}}
      \put(0.5,1){\circle*{0.25}}
      \put(1.5,1){\circle*{0.25}}
      \put(2.5,1){\circle*{0.25}}
      \put(3.5,1){\circle*{0.25}}
      \put(4.5,1){\circle*{0.25}}
      \put(0.5,2){\circle*{0.25}}
      \put(1.5,2){\circle*{0.25}}
      \put(2.5,2){\circle*{0.25}}
      \put(3.5,2){\circle*{0.25}}
      \put(4.5,2){\circle*{0.25}}
      \put(0.5,3){\circle*{0.25}}
      \put(1.5,3){\circle*{0.25}}
      \put(2.5,3){\circle*{0.25}}
      \put(3.5,3){\circle*{0.25}}
      \put(4.5,3){\circle*{0.25}}
      \put(0,0.5){\circle*{0.25}}
      \put(1,0.5){\circle*{0.25}}
      \put(2,0.5){\circle*{0.25}}
      \put(3,0.5){\circle*{0.25}}
      \put(4,0.5){\circle*{0.25}}
      \put(5,0.5){\circle*{0.25}}
      \put(0,2.5){\circle*{0.25}}
      \put(1,2.5){\circle*{0.25}}
      \put(2,2.5){\circle*{0.25}}
      \put(3,2.5){\circle*{0.25}}
      \put(4,2.5){\circle*{0.25}}
      \put(5,2.5){\circle*{0.25}}
      \put(4.5,0){\line(1,0){4}}
      \put(4,0.5){\line(1,0){1}}
      \put(6,0.5){\line(1,0){1}}
      \put(8,0.5){\line(1,0){1}}
      \put(4.5,1){\line(1,0){4}}
      \put(4.5,2){\line(1,0){4}}
      \put(4,2.5){\line(1,0){1}}
      \put(6,2.5){\line(1,0){1}}
      \put(8,2.5){\line(1,0){1}}
      \put(4.5,3){\line(1,0){4}}
      \put(4.5,0){\line(-1,1){0.5}}
      \put(4.5,0){\line(1,1){0.5}}
      \put(5.5,0){\line(-1,1){0.5}}
      \put(5.5,0){\line(1,1){0.5}}
      \put(6.5,0){\line(-1,1){0.5}}
      \put(6.5,0){\line(1,1){0.5}}
      \put(7.5,0){\line(-1,1){0.5}}
      \put(7.5,0){\line(1,1){0.5}}
      \put(8.5,0){\line(-1,1){0.5}}
      \put(8.5,0){\line(1,1){0.5}}
      \put(4.5,1){\line(-1,-1){0.5}}
      \put(4.5,1){\line(1,-1){0.5}}
      \put(5.5,1){\line(-1,-1){0.5}}
      \put(5.5,1){\line(1,-1){0.5}}
      \put(6.5,1){\line(-1,-1){0.5}}
      \put(6.5,1){\line(1,-1){0.5}}
      \put(7.5,1){\line(-1,-1){0.5}}
      \put(7.5,1){\line(1,-1){0.5}}
      \put(8.5,1){\line(-1,-1){0.5}}
      \put(8.5,1){\line(1,-1){0.5}}
      \put(4.5,2){\line(-1,1){0.5}}
      \put(4.5,2){\line(1,1){0.5}}
      \put(5.5,2){\line(-1,1){0.5}}
      \put(5.5,2){\line(1,1){0.5}}
      \put(6.5,2){\line(-1,1){0.5}}
      \put(6.5,2){\line(1,1){0.5}}
      \put(7.5,2){\line(-1,1){0.5}}
      \put(7.5,2){\line(1,1){0.5}}
      \put(8.5,2){\line(-1,1){0.5}}
      \put(8.5,2){\line(1,1){0.5}}
      \put(4.5,3){\line(-1,-1){0.5}}
      \put(4.5,3){\line(1,-1){0.5}}
      \put(5.5,3){\line(-1,-1){0.5}}
      \put(5.5,3){\line(1,-1){0.5}}
      \put(6.5,3){\line(-1,-1){0.5}}
      \put(6.5,3){\line(1,-1){0.5}}
      \put(7.5,3){\line(-1,-1){0.5}}
      \put(7.5,3){\line(1,-1){0.5}}
      \put(8.5,3){\line(-1,-1){0.5}}
      \put(8.5,3){\line(1,-1){0.5}}
      \put(4.5,1){\line(0,1){1}}
      \put(5.5,1){\line(0,1){1}}
      \put(6.5,1){\line(0,1){1}}
      \put(7.5,1){\line(0,1){1}}
      \put(8.5,1){\line(0,1){1}}
      \put(4.5,0){\circle*{0.25}}
      \put(5.5,0){\circle*{0.25}}
      \put(6.5,0){\circle*{0.25}}
      \put(7.5,0){\circle*{0.25}}
      \put(8.5,0){\circle*{0.25}}
      \put(4.5,1){\circle*{0.25}}
      \put(5.5,1){\circle*{0.25}}
      \put(6.5,1){\circle*{0.25}}
      \put(7.5,1){\circle*{0.25}}
      \put(8.5,1){\circle*{0.25}}
      \put(4.5,2){\circle*{0.25}}
      \put(5.5,2){\circle*{0.25}}
      \put(6.5,2){\circle*{0.25}}
      \put(7.5,2){\circle*{0.25}}
      \put(8.5,2){\circle*{0.25}}
      \put(4.5,3){\circle*{0.25}}
      \put(5.5,3){\circle*{0.25}}
      \put(6.5,3){\circle*{0.25}}
      \put(7.5,3){\circle*{0.25}}
      \put(8.5,3){\circle*{0.25}}
      \put(4,0.5){\circle*{0.25}}
      \put(5,0.5){\circle*{0.25}}
      \put(6,0.5){\circle*{0.25}}
      \put(7,0.5){\circle*{0.25}}
      \put(8,0.5){\circle*{0.25}}
      \put(9,0.5){\circle*{0.25}}
      \put(4,2.5){\circle*{0.25}}
      \put(5,2.5){\circle*{0.25}}
      \put(6,2.5){\circle*{0.25}}
      \put(7,2.5){\circle*{0.25}}
      \put(8,2.5){\circle*{0.25}}
      \put(9,2.5){\circle*{0.25}}
      \put(8.5,0){\line(1,0){4}}
      \put(8,0.5){\line(1,0){1}}
      \put(10,0.5){\line(1,0){1}}
      \put(12,0.5){\line(1,0){1}}
      \put(8.5,1){\line(1,0){4}}
      \put(8.5,2){\line(1,0){4}}
      \put(8,2.5){\line(1,0){1}}
      \put(10,2.5){\line(1,0){1}}
      \put(12,2.5){\line(1,0){1}}
      \put(8.5,3){\line(1,0){4}}
      \put(8.5,0){\line(-1,1){0.5}}
      \put(8.5,0){\line(1,1){0.5}}
      \put(9.5,0){\line(-1,1){0.5}}
      \put(9.5,0){\line(1,1){0.5}}
      \put(10.5,0){\line(-1,1){0.5}}
      \put(10.5,0){\line(1,1){0.5}}
      \put(11.5,0){\line(-1,1){0.5}}
      \put(11.5,0){\line(1,1){0.5}}
      \put(12.5,0){\line(-1,1){0.5}}
      \put(12.5,0){\line(1,1){0.5}}
      \put(8.5,1){\line(-1,-1){0.5}}
      \put(8.5,1){\line(1,-1){0.5}}
      \put(9.5,1){\line(-1,-1){0.5}}
      \put(9.5,1){\line(1,-1){0.5}}
      \put(10.5,1){\line(-1,-1){0.5}}
      \put(10.5,1){\line(1,-1){0.5}}
      \put(11.5,1){\line(-1,-1){0.5}}
      \put(11.5,1){\line(1,-1){0.5}}
      \put(12.5,1){\line(-1,-1){0.5}}
      \put(12.5,1){\line(1,-1){0.5}}
      \put(8.5,2){\line(-1,1){0.5}}
      \put(8.5,2){\line(1,1){0.5}}
      \put(9.5,2){\line(-1,1){0.5}}
      \put(9.5,2){\line(1,1){0.5}}
      \put(10.5,2){\line(-1,1){0.5}}
      \put(10.5,2){\line(1,1){0.5}}
      \put(11.5,2){\line(-1,1){0.5}}
      \put(11.5,2){\line(1,1){0.5}}
      \put(12.5,2){\line(-1,1){0.5}}
      \put(12.5,2){\line(1,1){0.5}}
      \put(8.5,3){\line(-1,-1){0.5}}
      \put(8.5,3){\line(1,-1){0.5}}
      \put(9.5,3){\line(-1,-1){0.5}}
      \put(9.5,3){\line(1,-1){0.5}}
      \put(10.5,3){\line(-1,-1){0.5}}
      \put(10.5,3){\line(1,-1){0.5}}
      \put(11.5,3){\line(-1,-1){0.5}}
      \put(11.5,3){\line(1,-1){0.5}}
      \put(12.5,3){\line(-1,-1){0.5}}
      \put(12.5,3){\line(1,-1){0.5}}
      \put(8.5,1){\line(0,1){1}}
      \put(9.5,1){\line(0,1){1}}
      \put(10.5,1){\line(0,1){1}}
      \put(11.5,1){\line(0,1){1}}
      \put(12.5,1){\line(0,1){1}}
      \put(8.5,0){\circle*{0.25}}
      \put(9.5,0){\circle*{0.25}}
      \put(10.5,0){\circle*{0.25}}
      \put(11.5,0){\circle*{0.25}}
      \put(12.5,0){\circle*{0.25}}
      \put(8.5,1){\circle*{0.25}}
      \put(9.5,1){\circle*{0.25}}
      \put(10.5,1){\circle*{0.25}}
      \put(11.5,1){\circle*{0.25}}
      \put(12.5,1){\circle*{0.25}}
      \put(8.5,2){\circle*{0.25}}
      \put(9.5,2){\circle*{0.25}}
      \put(10.5,2){\circle*{0.25}}
      \put(11.5,2){\circle*{0.25}}
      \put(12.5,2){\circle*{0.25}}
      \put(8.5,3){\circle*{0.25}}
      \put(9.5,3){\circle*{0.25}}
      \put(10.5,3){\circle*{0.25}}
      \put(11.5,3){\circle*{0.25}}
      \put(12.5,3){\circle*{0.25}}
      \put(8,0.5){\circle*{0.25}}
      \put(9,0.5){\circle*{0.25}}
      \put(10,0.5){\circle*{0.25}}
      \put(11,0.5){\circle*{0.25}}
      \put(12,0.5){\circle*{0.25}}
      \put(13,0.5){\circle*{0.25}}
      \put(8,2.5){\circle*{0.25}}
      \put(9,2.5){\circle*{0.25}}
      \put(10,2.5){\circle*{0.25}}
      \put(11,2.5){\circle*{0.25}}
      \put(12,2.5){\circle*{0.25}}
      \put(13,2.5){\circle*{0.25}}
      \put(0.5,3){\line(1,0){4}}
      \put(0.5,4){\line(1,0){4}}
      \put(0,4.5){\line(1,0){1}}
      \put(2,4.5){\line(1,0){1}}
      \put(4,4.5){\line(1,0){1}}
      \put(0.5,5){\line(1,0){4}}
      \put(0.5,3){\line(-1,-1){0.5}}
      \put(0.5,3){\line(1,-1){0.5}}
      \put(1.5,3){\line(-1,-1){0.5}}
      \put(1.5,3){\line(1,-1){0.5}}
      \put(2.5,3){\line(-1,-1){0.5}}
      \put(2.5,3){\line(1,-1){0.5}}
      \put(3.5,3){\line(-1,-1){0.5}}
      \put(3.5,3){\line(1,-1){0.5}}
      \put(4.5,3){\line(-1,-1){0.5}}
      \put(4.5,3){\line(1,-1){0.5}}
      \put(0.5,4){\line(-1,1){0.5}}
      \put(0.5,4){\line(1,1){0.5}}
      \put(1.5,4){\line(-1,1){0.5}}
      \put(1.5,4){\line(1,1){0.5}}
      \put(2.5,4){\line(-1,1){0.5}}
      \put(2.5,4){\line(1,1){0.5}}
      \put(3.5,4){\line(-1,1){0.5}}
      \put(3.5,4){\line(1,1){0.5}}
      \put(4.5,4){\line(-1,1){0.5}}
      \put(4.5,4){\line(1,1){0.5}}
      \put(0.5,5){\line(-1,-1){0.5}}
      \put(0.5,5){\line(1,-1){0.5}}
      \put(1.5,5){\line(-1,-1){0.5}}
      \put(1.5,5){\line(1,-1){0.5}}
      \put(2.5,5){\line(-1,-1){0.5}}
      \put(2.5,5){\line(1,-1){0.5}}
      \put(3.5,5){\line(-1,-1){0.5}}
      \put(3.5,5){\line(1,-1){0.5}}
      \put(4.5,5){\line(-1,-1){0.5}}
      \put(4.5,5){\line(1,-1){0.5}}
      \put(0.5,3){\line(0,1){1}}
      \put(1.5,3){\line(0,1){1}}
      \put(2.5,3){\line(0,1){1}}
      \put(3.5,3){\line(0,1){1}}
      \put(4.5,3){\line(0,1){1}}
      \put(0.5,4){\circle*{0.25}}
      \put(1.5,4){\circle*{0.25}}
      \put(2.5,4){\circle*{0.25}}
      \put(3.5,4){\circle*{0.25}}
      \put(4.5,4){\circle*{0.25}}
      \put(0.5,5){\circle*{0.25}}
      \put(1.5,5){\circle*{0.25}}
      \put(2.5,5){\circle*{0.25}}
      \put(3.5,5){\circle*{0.25}}
      \put(4.5,5){\circle*{0.25}}
      \put(0,4.5){\circle*{0.25}}
      \put(1,4.5){\circle*{0.25}}
      \put(2,4.5){\circle*{0.25}}
      \put(3,4.5){\circle*{0.25}}
      \put(4,4.5){\circle*{0.25}}
      \put(5,4.5){\circle*{0.25}}
      \put(4.5,3){\line(1,0){4}}
      \put(4.5,4){\line(1,0){4}}
      \put(4,4.5){\line(1,0){1}}
      \put(6,4.5){\line(1,0){1}}
      \put(8,4.5){\line(1,0){1}}
      \put(4.5,5){\line(1,0){4}}
      \put(4.5,3){\line(-1,-1){0.5}}
      \put(4.5,3){\line(1,-1){0.5}}
      \put(5.5,3){\line(-1,-1){0.5}}
      \put(5.5,3){\line(1,-1){0.5}}
      \put(6.5,3){\line(-1,-1){0.5}}
      \put(6.5,3){\line(1,-1){0.5}}
      \put(7.5,3){\line(-1,-1){0.5}}
      \put(7.5,3){\line(1,-1){0.5}}
      \put(8.5,3){\line(-1,-1){0.5}}
      \put(8.5,3){\line(1,-1){0.5}}
      \put(4.5,4){\line(-1,1){0.5}}
      \put(4.5,4){\line(1,1){0.5}}
      \put(5.5,4){\line(-1,1){0.5}}
      \put(5.5,4){\line(1,1){0.5}}
      \put(6.5,4){\line(-1,1){0.5}}
      \put(6.5,4){\line(1,1){0.5}}
      \put(7.5,4){\line(-1,1){0.5}}
      \put(7.5,4){\line(1,1){0.5}}
      \put(8.5,4){\line(-1,1){0.5}}
      \put(8.5,4){\line(1,1){0.5}}
      \put(4.5,5){\line(-1,-1){0.5}}
      \put(4.5,5){\line(1,-1){0.5}}
      \put(5.5,5){\line(-1,-1){0.5}}
      \put(5.5,5){\line(1,-1){0.5}}
      \put(6.5,5){\line(-1,-1){0.5}}
      \put(6.5,5){\line(1,-1){0.5}}
      \put(7.5,5){\line(-1,-1){0.5}}
      \put(7.5,5){\line(1,-1){0.5}}
      \put(8.5,5){\line(-1,-1){0.5}}
      \put(8.5,5){\line(1,-1){0.5}}
      \put(4.5,3){\line(0,1){1}}
      \put(5.5,3){\line(0,1){1}}
      \put(6.5,3){\line(0,1){1}}
      \put(7.5,3){\line(0,1){1}}
      \put(8.5,3){\line(0,1){1}}
      \put(4.5,4){\circle*{0.25}}
      \put(5.5,4){\circle*{0.25}}
      \put(6.5,4){\circle*{0.25}}
      \put(7.5,4){\circle*{0.25}}
      \put(8.5,4){\circle*{0.25}}
      \put(4.5,5){\circle*{0.25}}
      \put(5.5,5){\circle*{0.25}}
      \put(6.5,5){\circle*{0.25}}
      \put(7.5,5){\circle*{0.25}}
      \put(8.5,5){\circle*{0.25}}
      \put(4,4.5){\circle*{0.25}}
      \put(5,4.5){\circle*{0.25}}
      \put(6,4.5){\circle*{0.25}}
      \put(7,4.5){\circle*{0.25}}
      \put(8,4.5){\circle*{0.25}}
      \put(9,4.5){\circle*{0.25}}
      \put(8.5,3){\line(1,0){4}}
      \put(8.5,4){\line(1,0){4}}
      \put(8,4.5){\line(1,0){1}}
      \put(10,4.5){\line(1,0){1}}
      \put(12,4.5){\line(1,0){1}}
      \put(8.5,5){\line(1,0){4}}
      \put(8.5,3){\line(-1,-1){0.5}}
      \put(8.5,3){\line(1,-1){0.5}}
      \put(9.5,3){\line(-1,-1){0.5}}
      \put(9.5,3){\line(1,-1){0.5}}
      \put(10.5,3){\line(-1,-1){0.5}}
      \put(10.5,3){\line(1,-1){0.5}}
      \put(11.5,3){\line(-1,-1){0.5}}
      \put(11.5,3){\line(1,-1){0.5}}
      \put(12.5,3){\line(-1,-1){0.5}}
      \put(12.5,3){\line(1,-1){0.5}}
      \put(8.5,4){\line(-1,1){0.5}}
      \put(8.5,4){\line(1,1){0.5}}
      \put(9.5,4){\line(-1,1){0.5}}
      \put(9.5,4){\line(1,1){0.5}}
      \put(10.5,4){\line(-1,1){0.5}}
      \put(10.5,4){\line(1,1){0.5}}
      \put(11.5,4){\line(-1,1){0.5}}
      \put(11.5,4){\line(1,1){0.5}}
      \put(12.5,4){\line(-1,1){0.5}}
      \put(12.5,4){\line(1,1){0.5}}
      \put(8.5,5){\line(-1,-1){0.5}}
      \put(8.5,5){\line(1,-1){0.5}}
      \put(9.5,5){\line(-1,-1){0.5}}
      \put(9.5,5){\line(1,-1){0.5}}
      \put(10.5,5){\line(-1,-1){0.5}}
      \put(10.5,5){\line(1,-1){0.5}}
      \put(11.5,5){\line(-1,-1){0.5}}
      \put(11.5,5){\line(1,-1){0.5}}
      \put(12.5,5){\line(-1,-1){0.5}}
      \put(12.5,5){\line(1,-1){0.5}}
      \put(8.5,3){\line(0,1){1}}
      \put(9.5,3){\line(0,1){1}}
      \put(10.5,3){\line(0,1){1}}
      \put(11.5,3){\line(0,1){1}}
      \put(12.5,3){\line(0,1){1}}
      \put(8.5,4){\circle*{0.25}}
      \put(9.5,4){\circle*{0.25}}
      \put(10.5,4){\circle*{0.25}}
      \put(11.5,4){\circle*{0.25}}
      \put(12.5,4){\circle*{0.25}}
      \put(8.5,5){\circle*{0.25}}
      \put(9.5,5){\circle*{0.25}}
      \put(10.5,5){\circle*{0.25}}
      \put(11.5,5){\circle*{0.25}}
      \put(12.5,5){\circle*{0.25}}
      \put(8,4.5){\circle*{0.25}}
      \put(9,4.5){\circle*{0.25}}
      \put(10,4.5){\circle*{0.25}}
      \put(11,4.5){\circle*{0.25}}
      \put(12,4.5){\circle*{0.25}}
      \put(13,4.5){\circle*{0.25}}
    \end{picture}
  \end{center}
  \caption{Infinite $5$-regular match stick graph.}
  \label{fig_infinite}
\end{figure}

\section{Conclusion}
\noindent
In this paper we have proven that no finite $5$-regular matchstick graph exists. In Figure \ref{fig_infinite} we have depicted a fraction of an infinite $5$-regular matchstick graph. Since we can shift single rows of such a construction there exists an uncountable number of these graphs (even in the combinatorial sense). As mentioned by Bojan Mohar there are several further examples. 
Starting with the infinite $6$-regular triangulation we can delete several vertices at different rows and obtain an example by suitably removing horizontal edges to enforce degree $5$ for all vertices. Taking an arbitrary $\mathcal{TQ}$-class and continueing the  boundary vertices with trees gives another set of examples.

In the context of regular matchstick graphs the only remaining question is the determination of $m(4)$, i~e. the smallest $4$-regular matchstick graph.

We strongly believe that there exists a more general topological interpretation of Equation~(\ref{eqn_paramater}). We have simply discovered it going along the concept of a potential function in order to prove Theorem~\ref{thm_main}.

\providecommand{\href}[2]{#2}

\end{document}